\let\@fnsymbol\@arabic
\newlist{compactenum}{enumerate}{4}
\setlist[compactenum,1]{nolistsep}
\numberwithin{equation}{section}
\def\@noindentfalse{\global\let\if@noindent\iffalse}
\def\@noindenttrue {\global\let\if@noindent\iftrue}
\def\@aftertheorem{%
  \@noindenttrue
  \everypar{%
    \if@noindent%
      \@noindentfalse\clubpenalty\@M\setbox\z@\lastbox%
    \else%
      \clubpenalty \@clubpenalty\everypar{}%
    \fi}}
\theoremstyle{plain}
\newtheorem{theorem}{Theorem}[section]
\newtheorem{lemma}[theorem]{Lemma}
\newtheorem{corollary}[theorem]{Corollary}
\theoremstyle{definition}
\newtheorem{remark}[theorem]{Remark}
\newtheorem{assumption}[theorem]{Assumption}
\newtheorem{example}[theorem]{Example}
\def\note#1{\par\smallskip%
\noindent\kern-0.01\hsize%
\setlength\fboxrule{0pt}\fbox{\setlength\fboxrule{0.5pt}\fbox{%
\llap{$\boldsymbol\Longrightarrow$ }%
\vtop{\hsize=0.98\hsize\parindent=0cm\small\rm #1}%
\rlap{$\enskip\,\boldsymbol\Longleftarrow$}
}}%
}
\def\given{\mskip 0.5mu plus 0.25mu\vert\mskip 0.5mu plus 0.15mu}
\newcounter{bracketlevel}%
\def\@bracketfactory#1#2#3#4#5#6{%
\expandafter\def\csname#1\endcsname##1{%
\global\advance\c@bracketlevel 1\relax%
\global\expandafter\let\csname @middummy\alph{bracketlevel}\endcsname\given%
\global\def\given{\mskip#5\csname#4\endcsname\vert\mskip#6}\csname#4l\endcsname#2##1\csname#4r\endcsname#3%
\global\expandafter\let\expandafter\given\csname @middummy\alph{bracketlevel}\endcsname%
\global\advance\c@bracketlevel -1\relax%
}%
}
\def\bracketfactory#1#2#3{%
\@bracketfactory{#1}{#2}{#3}{relax}{0.5mu plus 0.25mu}{0.5mu plus 0.15mu}
\@bracketfactory{b#1}{#2}{#3}{big}{1mu plus 0.25mu minus 0.25mu}{0.6mu plus 0.15mu minus 0.15mu}
\@bracketfactory{bb#1}{#2}{#3}{Big}{2.4mu plus 0.8mu minus 0.8mu}{1.8mu plus 0.6mu minus 0.6mu}
\@bracketfactory{bbb#1}{#2}{#3}{bigg}{3.2mu plus 1mu minus 1mu}{2.4mu plus 0.75mu minus 0.75mu}
\@bracketfactory{bbbb#1}{#2}{#3}{Bigg}{4mu plus 1mu minus 1mu}{3mu plus 0.75mu minus 0.75mu}
}
\newcounter{ctr}\loop\stepcounter{ctr}\edef\X{\@Alph\c@ctr}%
\edef\csname s\X\endcsname{\noexpand\mathscr{\X}}
\edef\csname c\X\endcsname{\noexpand\mathcal{\X}}
\edef\csname b\X\endcsname{\noexpand\boldsymbol{\X}}
\edef\csname I\X\endcsname{\noexpand\mathbb{\X}}
\let\@IE\IE\let\IE\undefined
\newcommand{\IE}{\mathop{{}\@IE}\mathopen{}}
\let\@IP\IP\let\IP\undefined
\newcommand{\IP}{\mathop{{}\@IP}}
\newcommand{\bigo}{\mathop{{}\mathrm{O}}\mathopen{}}
\newcommand{\E}{\mathbb{E}}
\newcommand{\R}{\mathbb{R}}
\newcommand{\eps}{\varepsilon}
\newcommand{\mel}{\MoveEqLeft}
\renewcommand{\phi}{\varphi}
\renewcommand{\epsilon}{\varepsilon}
\let\original@left\left
\let\original@right\right
\renewcommand{\left}{\mathopen{}\mathclose\bgroup\original@left}
\renewcommand{\right}{\aftergroup\egroup\original@right}
\def\^#1{\relax\ifmmode {\mathaccent"705E #1} \else {\accent94 #1} \fi}
\def\˘#1{\relax\ifmmode {\mathaccent"7014 #1} \else {\check{#1}} \fi}
\def\~#1{\relax\ifmmode {\mathaccent"707E #1} \else {\accent"7E #1} \fi}
\def\*#1{\relax#1^\ast}
\edef\-#1{\relax\noexpand\ifmmode {\noexpand\bar{#1}} \noexpand\else \-#1\noexpand\fi}
\def\>#1{\vec{#1}}
\def\.#1{\dot{#1}}
\def\atop{\@@atop}
\def\%#1{\mathcal{#1}}
\renewcommand{\P}{\mathbb{P}}
\renewcommand{\E}{\mathbb{E}}
\renewcommand{\R}{\mathbb{R}}
\newcommand{\N}{\mathbb{N}}
\newcommand{\h}{1}
\newcommand{\s}{0}
\newcommand{\e}{\mathrm{e}}
\renewcommand{\d}{\mathrm{d}}
\def\cL{\mathcal{L}}
\def\cM{\mathcal{M}}
\def\cE{\mathcal{E}}
\begin{document}


\title{The Moran model with random resampling rates}


\author{Siva Athreya, \quad  Frank den Hollander  and Adrian R\"ollin}

\newcommand{\Addresses}{{
  \bigskip
  \footnotesize

  Siva Athreya, \textsc{ International Centre for Theoretical Sciences, Survey No.\ 151, Shivakote, Hesaraghatta Hobli, Bengaluru 560089, India and Indian Statistical Institute, 8th Mile Mysore Road, Bengaluru 560059, India.}  \par \nopagebreak \texttt{athreya@icts.res.in},

  \medskip
  Frank den Hollander, \textsc{Mathematical Institute, Leiden University, Niels Bohrweg 1, 2333 CA  Leiden, The Netherlands. }
  \par \nopagebreak \texttt{ denholla@math.leidenuniv.nl}

  \medskip
  
  Adrian R\"ollin, \textsc{Department of Statistics and Data Science, National University of Singapore, 6 Science Drive 2, Singapore 117546. }
 \par \nopagebreak \texttt{adrian.roellin@nus.edu.sg}
  
}}

\date{}

\maketitle

\begin{abstract}
In this paper we consider the two-type Moran model with $N$ individuals. Each individual is assigned a resampling rate, drawn independently from a probability distribution $\P$ on $\R_+$, and a type, either $\h$ or $\s$. Each individual resamples its type at its assigned rate, by adopting the type of an individual drawn uniformly at random. Let $Y^N(t)$ denote the empirical distribution of the resampling rates of the individuals with type $\h$ at time $Nt$. We show that if $\P$ has countable support and satisfies certain tail and moment conditions, then in the limit as $N\to\infty$ the process $(Y^N(t))_{t \geq 0}$ converges in law to the process $(S(t)\,\P)_{t \geq 0}$, in the so-called Meyer-Zheng topology, where $(S(t))_{t \geq 0}$ is the Fisher-Wright diffusion with diffusion constant $D$ given by $1/D = \int_{\R_+} (1/r)\,\P(\d r)$. 
\end{abstract}

\medskip\noindent
\emph{Keywords:} 
Moran model; random resampling rates; Meyer-Zheng topology; Lyapunov function; Fisher-Wright diffusion.

\medskip\noindent
\emph{MSC 2010:} 
Primary 
60J70, 
60K35; 
Secondary 
92D25. 

\section{Introduction}
\label{s.introduct}

The Moran model, describing the evolution via \emph{resampling} of a population of individuals carrying genetic types, is a workhorse in population genetics. It is tractable because it is mean-field, i.e., all individuals exchange type with each other in the same manner, and because it has a simple dual, namely, the death process on the positive integers. It has been the basis for much more sophisticated models incorporating evolutionary forces such as mutation, selection, recombination and migration (see \cite{DU08}). 


\subsection{The two-type Moran model}

{A} standard model for evolution of types in population genetics is the two-type Moran model (see \cite[Section 1.5]{DU08}). Consider a population of $N \in \N$ individuals. Each individual carries either type $\h$ or type $\s$, and at rate $1$ chooses an individual uniformly at random from the population (possibly itself) and adopts its type. Let $X^N(t)$ be the number of individuals with type $\h$ at time $t$. For every $N \in \N$,
\begin{equation}
\lim_{t \to \infty} \P(X^N(t) = 0 \text{ or } X^N(t) = N) = 1,
\end{equation}
because $\{0\}$ and $\{N\}$ are traps for $(X^N(t))_{t \geq 0}$: genetic diversity is lost forever when all individuals have type $\s$ or all individuals have type $\h$, respectively. 

To see what happens prior to trapping, scale space and time to get
\begin{equation}
Y^N(t) = \frac{1}{N} X^N(Nt),
\end{equation}
which is the fraction of individuals of type $\h$ at time $Nt$. It is well known that, if $\lim_{N\to\infty} Y^N(0) = y \in [0,1]$, then 
\begin{equation}
\label{genconv}
\lim_{N\to\infty} \cL[Y^N] = \cL[Y] \quad \text{in the Skorohod topology},
\end{equation}
where $Y^N = (Y^N(t))_{t \geq 0}$ and $Y=(Y(t))_{t \geq 0}$, and $\cL$ stands for law. The limit $Y$ is the \emph{Fisher-Wright diffusion}
\begin{equation}
{{\mathrm d}}Y(t) = \sqrt{Y(t)\,(1-Y(t))} \, {{\mathrm d}}W(t), \qquad Y(0) = y,
\end{equation}
where $W=(W(t))_{t \geq 0}$ is standard Brownian motion. The convergence in \eqref{genconv} is proven by showing that the infinitesimal generator $L^N$ of $Y^N$ converges to the infinitesimal generator $L$ of $Y$ on a dense set of test functions (see \cite[Chapter 4, Section 8]{EK86}). The latter is given by
\begin{equation} \label{eq:gen}
(Lf)(y) = y(1-y) f''(y), \qquad y \in [0,1],
\end{equation}
for twice differentiable test functions $f\colon\,[0,1] \to \R$. 

Let $\tau = \inf\{t \geq 0\colon\,Y(t) = 0 \text{ or }Y(t) = 1\}$. It is know that $\tau < \infty$ almost surely, i.e., genetic diversity is lost at time $\tau N + o(N)$ as $N\to\infty$.  


\subsection{The two-type Moran model with random resampling rates} 
\label{subsec: our contribution}

It is natural to allow for resampling rates that are \emph{random}. {Indeed, individuals typically live in different \emph{environments}, causing them to have different \emph{fitnesses} (e.g.\ due to lack or abundance of nutrients). In fact, the standard Moran model is an idealisation in which such natural variations are discarded. The situation is similar to what is seen in statistical physics, where disorder is introduced in the interaction between the constituent particles of a material, simply because almost all materials in nature are inhomogeneous at the microscopic level. The disorder typically affects the behaviour at the macroscopic level, in particular, the \emph{transport coefficients} that capture physical phenomena like diffusion, viscosity or compressibility.} 

In the present paper we consider a version of the two-type Moran model in which each individual carries a \emph{pre-assigned resampling rate}, drawn from a discrete probability distribution $\P$ on $\R_+ = (0,\infty)$. We will see that, in the large-population-size limit and after scaling time by the population size, the empirical distribution of the resampling rates of the individuals with type $\h$ converges to $\P$ times a Fisher-Wright diffusion with a \emph{diffusion constant} that depends on $\P$. The proof of this fact turns out to be somewhat delicate because the path of the empirical distribution \emph{contracts to the submanifold} $\{s\P\colon\,s \in [0,1]\}$ at a rate that diverges with the population size (so that convergence of the infinitesimal generator away from the submanifold fails). This contraction, which is absent in the standard Moran model, requires us to work with a particular path topology, called the \emph{Meyer-Zheng topology}, which roughly speaking induces weak convergence in ``time $\times$ space''. We may think of the latter as convergence outside a set of exceptional times.

Very little is known about population genetic models with disorder. The reason is that \emph{the disorder destroys the exchangeability of the individuals}, {especially in the quenched setting,} and therefore makes duality, another workhorse in population genetics, much more involved. In fact, in the present paper we do not resort to duality and work with \emph{Lyapunov functions} to control the contraction to the submanifold. How the contraction precisely occurs comes with a number of subtleties. Our proofs are restricted to discrete $\P$ because the submanifold depends on $\P$. Perturbation arguments are delicate because they displace the submanifold. 


\subsection{Main theorem} 
\label{sec:mr}

Throughout the sequel we assume that $\P$ satisfies the following assumption. 

\begin{assumption}
\label{ass}
$\mbox{}$
\begin{itemize}
\item[{\rm (1)}] 
$\P$ has countable support, i.e., $\P = \sum_{k\in\N} \mu_k \delta_{r_k}$ with $\mu=(\mu_k)_{k \in \N}$ a probability distribution on $\N$ and $(r_k)_{k\in\N}$ a sequence in $\R_+$. Moreover, $\sum_{k\in\N} \mu_k r_k^{-1} < \infty$.
\item[{\rm (2)}] 
Let $R=(R_i)_{i\in\N}$ be i.i.d.\ drawn from $\P$, i.e., $R$ has law $\P^{\otimes\N}$. For $N\in\N$, define
\begin{equation}
\label{NRdef}
\hat{n}^N_k =  \sum_{i \in [N]} 1_{\{R_i = r_k\}}, \quad k \in \N, \qquad N_R = \{k\in\N\colon\,\hat{n}_k^N >0\}. 
\end{equation}
Then
\begin{equation}
\label{eq:key}
\lim_{N\to\infty}\frac{(\sum_{k\in N_R} r_k)\,(\sum_{k\in N_R} r_k^{-2})}{N \min_{k\in N_R} r_k} = 0
\qquad \P^{\otimes\N}\text{-a.s.}
\end{equation}
\end{itemize}
\end{assumption} 
\noindent
Note that Assumption~\ref{ass} holds for all $\P$ with finite support and fails for all $\P$ with uncountable support. In Section~\ref{subsec:tail} we derive a sufficient condition on $\P$ with countable support guaranteeing that \eqref{eq:key} holds. For instance, it suffices that $\mu_k \asymp k^{-\chi}$, $k \to \infty$, with $\chi>2$ (right tail condition) and $\sum_{k\in\N} \mu_k\, r_k^{-\alpha} < \infty$, $\sum_{k\in\N} \mu_k\,r_k^{\beta} < \infty$ for $\alpha$ and $\beta$ sufficiently large depending on $\chi$ (left and right moment conditions). 

In what follows, we consider the \emph{quenched} process in which the resampling rates are chosen initially and are kept fixed while the process evolves. Let $R_i$ denote the random resampling rate of individual $i$. Note that $R=(R_i)_{i \in [N]}$ has law $\P^{\otimes [N]}$. The empirical distribution of the resampling rates is
\begin{equation}
\cE^N = \frac{1}{N} \sum_{i \in [N]} \delta_{R_i}. 
\end{equation}
Note that this is a probability distribution on $\R_+$ and that, $\P^{\otimes\N}$-a.s., $\cE^N$ converges to $\P$ as $N\to\infty$. Let $\tau_i(t)$ denote the type of individual $i$ at time $t$, and let 
\begin{equation}
Y^N(t) = \frac{1}{N} \sum_{i \in [N]} 1_{\{\tau_i(Nt) = \h\}} \delta_{R_i} 
\end{equation}
denote the \emph{empirical distribution of the resampling rates of the individuals with type $\h$ at time $Nt$}. Note that this is an element of $\cM_{\leq 1}$, the set of sub-probability distributions on $\R_+$. Our main result says that the process $Y^N=(Y^N(t))_{t \geq 0}$ converges in law, in the so-called \emph{Meyer-Zheng topology} (explained in Section~\ref{sec: defnot}), to a simple but interesting limiting process. We may rewrite $\cE^N$ as 
\begin {equation}
\label{nNkdef}
\cE^N = \sum_{k\in\N} n^N_k \delta_{r_k}, \qquad n^N_k = \frac{1}{N} {\hat{n}^N_k} = \frac1N  \sum_{i\in[N]} 1_{\{R_i=r_k\}}.
\end{equation} 
Note that $N_R =\{k \in \N\colon\, n^N_{k}>0\}$. Also note that $|N_R| \leq N$, and consequently the vector $n^N=(n^N_k)_{k \in \N}$ has at most $N$ non-zero elements. 

\begin{theorem} 
\label{thm:main}
Suppose that\/ $\P$ satisfies Assumption~\ref{ass}. If\/ $\lim_{N\to\infty} Y^N(0) = \nu$, then
\begin{equation} 
\lim_{N\to\infty} \cL[Y^N] = \cL[Y] \quad \text{in the Meyer-Zheng topology} \quad \P^{\otimes\N}\text{-a.s.}
\end{equation}
with $Y = (Y(t))_{t \geq 0}$ given by
\begin{equation}
Y(t) = S(t)\,\P,
\end{equation}
where $S=(S(t))_{t \geq 0}$ is the Fisher-Wright diffusion on $[0,1]$ given by
\begin{equation} \label{FWlim}
{{\mathrm d}}S(t) = \sqrt{D\,S(t)\,(1-S(t))}\, {{\mathrm d}}W(t), \qquad S(0) = \nu(\R_+),
\end{equation}
with diffusion constant $D$ defined as $1/D = \int_{\R_+} (1/r)\,\P({{\mathrm d}}r) = \sum_{k\in\N} \mu_k\,r_k^{-1}$.
\end{theorem}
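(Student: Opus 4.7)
The strategy is to decompose the dynamics of $Y^N$ into a slow one-dimensional martingale and fast fluctuations that contract onto the submanifold $\{s\,\cE^N : s \in [0,1]\}$ at rate of order $N$. Writing $m_k^N(t) := Y^N(t)(\{r_k\})$ and $S^N(t) := \sum_{k\in N_R} m_k^N(t)$, direct computation of the generator $L^N$ of the time-rescaled process yields
\begin{equation*}
L^N m_k^N \;=\; N r_k\bigl(n_k^N S^N - m_k^N\bigr), \qquad L^N \pi^N \;=\; 0,
\end{equation*}
where $\pi^N := \sum_{k\in N_R} m_k^N/r_k$. Thus $\pi^N$ is a genuine $O(1)$-timescale martingale, while each $m_k^N$ is pulled toward $n_k^N S^N$ at the fast rate $N r_k$. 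The natural candidate for the slow variable is $\tilde S^N := D^N \pi^N$ with $D^N := (\sum_{k\in N_R} n_k^N/r_k)^{-1}$; by Assumption~\ref{ass}(1), $D^N \to D$, and $\tilde S^N = S^N$ on the submanifold.

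The core analytic estimate is a Lyapunov bound quantifying the contraction. Set $\delta_k^N := m_k^N - n_k^N S^N$ (so $\sum_k \delta_k^N \equiv 0$) and
\begin{equation*}
V(Y^N) \;:=\; \sum_{k\in N_R} \frac{(\delta_k^N)^2}{n_k^N}.
\end{equation*}
The identity $\sum_k \delta_k^N = 0$ annihilates the troublesome cross-term in the first-order part of $L^N V$, leaving $L^N V \leq -2 N r_{\min}\, V + C^N_1$, with $r_{\min} := \min_{k\in N_R} r_k$ and $C^N_1 \lesssim \sum_{k\in N_R} r_k$ from the Taylor second-order correction at each single-coordinate jump of size $1/N$. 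Dynkin's formula combined with the trivial bound $V \leq 1$ then gives $\E[\int_0^T V(Y^N(t))\,dt] \lesssim (1 + T\sum_k r_k)/(N r_{\min})$, and hence $Y^N - S^N \cE^N \to 0$ in $L^1([0,T])$ in probability.

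To identify the slow diffusion, expand the carré-du-champ of $\pi^N$ and substitute $m_k^N = n_k^N S^N + \delta_k^N$:
\begin{equation*}
L^N(\pi^N)^2 \;=\; \frac{2 S^N(1-S^N)}{D^N} \;+\; (1-2 S^N)\sum_{k\in N_R} \delta_k^N/r_k.
\end{equation*}
The remainder is bounded by Cauchy-Schwarz via $\bigl|\sum_k \delta_k^N/r_k\bigr| \leq V^{1/2}\bigl(\sum_k 1/r_k^2\bigr)^{1/2}$, so its $L^1([0,T])$ norm is controlled by $(\sum_k r_k)(\sum_k r_k^{-2})/(N r_{\min})$, which vanishes as $N\to\infty$ precisely by~\eqref{eq:key}. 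Hence the predictable quadratic variation of $\tilde S^N$ converges in probability to $\int_0^t 2 D\, \tilde S(s)(1-\tilde S(s))\,ds$. Since $L^N \pi^N = 0$, Meyer-Zheng tightness of $\pi^N$ is immediate, and the martingale problem identifies the limit as the Fisher-Wright diffusion of~\eqref{FWlim}. Combining this with $Y^N - S^N \cE^N \to 0$, the estimate $S^N - \tilde S^N = -D^N\sum_k \delta_k^N/r_k \to 0$ in time-integral from the same Cauchy-Schwarz inequality, and $\cE^N \to \P$ almost surely by the strong law of large numbers, yields $Y^N \to S\,\P$ in the Meyer-Zheng topology.

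The main obstacle is the Lyapunov step. The second-order jump contributions have to be tracked carefully enough to extract exactly the constant $\sum_k r_k$ in $C^N_1$, and the Cauchy-Schwarz coupling to the variance error of $\pi^N$ must be arranged so that the combined bound is precisely~\eqref{eq:key}; the random environment $R$ enters here and this is where the tail and moment hypotheses discussed in Section~\ref{subsec:tail} become essential. The Meyer-Zheng topology itself is what makes this program go through: it absorbs both the initial $O(1/N)$ transient during which $Y^N$ relaxes onto the submanifold and the persistent $O(1/\sqrt{N})$ fluctuations orthogonal to it, neither of which can be controlled in the Skorohod sense.
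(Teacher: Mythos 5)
Your proposal follows essentially the same route as the paper: the identical Lyapunov function $\sum_{k}\bigl(y_k-n^N_k\sum_{l}y_l\bigr)^2/n^N_k$ with the contraction bound of the form $-2NM^-_N h+2\Sigma_N$, the same drift-free slow variable $\sum_k Y^N_k/r_k$ rescaled by $D^N$, the same carr\'e-du-champ $2S^N(1-S^N)/D^N$ plus an error term controlled by Cauchy--Schwarz exactly through \eqref{eq:key}, and the same Slutsky-type assembly of the pieces in the Meyer--Zheng topology. The only (inessential) deviation is the final identification step, where you invoke Meyer--Zheng tightness of the bounded martingale and the martingale problem, whereas the paper proves convergence of finite-dimensional distributions via \cite[Chapter~4, Corollary~8.4]{EK86} and then upgrades to Meyer--Zheng convergence and an $L_2$-coupling using \cite{BM16} and Skorokhod representation.
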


\begin{remark}
{\rm Theorem \ref{thm:main} says that $Y^N=(Y^N(t))_{t \geq 0}$ collapses onto the submanifold $\{s\,\P\colon\,s \in [0,1]\}$ in the limit as $N\to\infty$ and on this submanifold performs an autonomous Fisher-Wright diffusion with a diffusion constant that is determined by $\P$. The contraction to the submanifold shows that $Y^N$ exhibits \emph{homogenisation}, i.e., the type and the resampling rate of the individuals are asymptotically independent as $N\to\infty$. The inverse of the diffusion constant $1/D$ represents the average time between two successive resamplings of an individual. Section \ref{subsec:tail} contains examples where Assumption~\ref{ass} holds. Assumption~\ref{ass} excludes $\P$ with uncountable support. We conjecture that Theorem \ref{thm:main} continues to hold in that setting, subject to appropriate tail and moment conditions, but we are unable to provide a proof. Along the way we will see what the hurdles are.}
 \end{remark}
 
{
\begin{remark}
{\rm The diffusion constant $D$ can be viewed a modulating an \emph{effective population size}, a notion that is important in population genetics (see e.g.\ \cite[Section 4.4]{DU08}). Since the time scale of the evolution is $N$, which is the population size, speeding up the evolution by a factor $D$ means that the effective population size is $DN$.}
\end{remark}
}
 
\begin{remark}
{\rm Our model can be interpreted as a Moran model with a particular kind of selection by viewing the resampling as done in pairs. Pick a pair of individuals, decide randomly which of the two individuals adopts the type of the other individual. In the standard Moran model the latter is decided uniformly at random. In our model it is decided with probabilities that depend on the inverses of the resampling rates of the two individuals.} 
\end{remark}

\begin{remark}
{\rm It is straightforward to extend Theorem~\ref{thm:main} to the Moran model with finitely many types. It is more challenging to include the Fleming-Viot model with infinitely many types. In the latter setting, it would be interesting to see what happens when the resampling rate distribution has no finite inverse moment at zero. Slow rates may affect the property that the state of the Fleming-Viot diffusion after an arbitrary positive time becomes atomic (a property referred to as ``coming down from infinity'').} 
\end{remark}

\paragraph*{Acknowledgements} 
SA was supported through a Knowledge Exchange Grant from ICTS. FdH was supported through NWO Gravitation Grant NETWORKS-024.002.003. AR was supported through Singapore Ministry of Education Academic Research Fund Tier 2 grant MOE2018-T2-2-076. The present work evolved out of the master thesis written by Jannetje Driessen at Leiden University completed on 25 May 2022. We also thank the International Centre for Theoretical Sciences (ICTS) for hospitality during the  ICTS-NETWORKS Workshop ``Challenges in Networks'' in January 2024.


\section{Definitions and notation}
\label{sec: defnot}

In Section~\ref{subsec: proc} we define the random process precisely and in Section~\ref{subsec: MZ} we recall the Meyer-Zheng topology. In Section~\ref{subsec:tail} we derive some tail estimates for the random resampling rates and exhibit sufficient conditions under which Assumption \ref{ass} holds.


\subsection{The random process} 
\label{subsec: proc}

Let $N \in \N$ be the size of the population. Individuals are labelled $[N]=\{1,\ldots,N\}$ and carry either type $\h$ or type $\s$. Each individual has a random resampling rate that is drawn from a probability distribution $\P$ on $\R_+$. The resampling rates are assigned \emph{independently} and before the process starts. {Recall the definition of $R = (R_i)_{i \in \N}$, the resampling rates assigned to the individuals, and the definition of $Y^N=(Y^N(t))_{t \geq 0}$, the empirical distribution of the resampling rates of the individuals with type $\h$ at time $Nt$, both introduced in Section \ref{sec:mr}.}

For $\Delta \in \{-1, +1\}$, $r \in \R_+$ and $y \in \cM_{\leq 1}$, put
\begin{equation} 
\label{eq:operT}
T^{r,\Delta}(y) = y + \frac{\Delta}{N}\delta_r.
\end{equation}
This operator encodes the event that an individual with resampling rate $r$ changes its type. Namely, when $\Delta = -1$ an individual with resampling rate $r$ and type $\h$ chooses an individual with type $\s$ and adopts its type, while when $\Delta = +1$ an individual with resampling rate $r$ and type $\s$ chooses an individual with type $\h$ and adopts its type. Define
\begin{equation}  
\label{eq:operR}
R^{r,-1}(y)\, {{\mathrm d}}r = r y({{\mathrm d}}r) [1-y(\R_+)], 
\qquad R^{r,+1}(y)\, {{\mathrm d}}r = r[\cE^N({{\mathrm d}}r) - y({{\mathrm d}}r)] y(\R_+).
\end{equation}
The change $\hat{T}^{r,-1}(y)$ happens at rate $\hat{R}^{r,-1}(y)$, because there are $y({{\mathrm d}}r)$ individuals with type $\h$ that can choose a new individual from the population at rate $r$. In order for such an individual to change to $\s$, it needs to choose an individual with type $\s$. There is a fraction $1-y(\R_+)$ individuals of type $\s$ in total, and the individual chooses one of them uniformly. The explanation for $T^{r,+1}(y)$ and $R^{r, +1}(y)$ is analogous. 

Note that the transition rates $R^{r,\Delta}(y)$ and the transition operators $T^{r,\Delta}(y)$ only depend on the current state $y$ of the process. Hence $Y^N$ is Markov. We can write down the infinitesimal generator $L^N$ of $Y$. Let $f\colon\,\cM_{\leq 1} \to \R$ be a bounded and continuous test function, and let $y \in \cM_{\leq 1}$. Then
\begin{equation}  
\label{eq:genL}
(L^Nf)(y) = \int_{\R_+} {{\mathrm d}}r \sum_{\Delta \in \{-1,+1\}} R^{r,\Delta}(y)[f(T^{r,\Delta}(y)) - f(y)] N^2.
\end{equation} 
The term $N^2$ comes from the time scaling by $N$ and the space scaling by $1/N$.


\subsection{The Meyer-Zheng topology}
\label{subsec: MZ}
 
For more background on what is written below, we refer the reader to \cite[Chapter III, no.\,40--46]{DM78}, \cite{MZ84},  and \cite[Appendix B]{GdHO21}. {The Meyer-Zheng topology is a topology on path-space that is weaker than the Skorohod topology. Intuitively, it captures weak convergence in `space $\times$ time', i.e., it allows for deviations of the path on exceptional sets of time. In our context these exceptional times are those where~$Y^N(t)$ could potentially be away from the submanifold $\{s\P\colon\,s \in [0,1]\}$.} 

The Meyer-Zheng topology assigns to each $\bar{\R}$-valued Borel measurable function $w=(w(t))_{t \geq 0}$ a probability measure on $[0,\infty] \times \bar{\R}$ defined by
\begin{equation}
\psi_w(A) = \int_{[0,\infty] \times \bar{\R}} 1_A(w(t)) \, \e^{-t} \, {{\mathrm d}}t,
\end{equation}
i.e., $\psi_w$ is the image under the mapping $t \to (t,w(t))$ of the probability measure $\e^{-t}{{\mathrm d}}t$. This image measure is called the \emph{pseudopath} associated with $w$, and is simply the occupation measure of $w$. We say that a sequence of pseudopaths induced by a sequence of c\`adl\`ag paths $(w_N)_{N \in \N}$ converges to a pseudopath induced by a c\`adl\`ag path $w$ if, for all continuous bounded function $f(t,w(t))$ on $[0,\infty] \times \bar{\R}$,

\begin{equation}
\lim_{N\to\infty} \int_{[0,\infty)} f(t, w_N(t))\ \e^{-t} \, {{\mathrm d}}t = \int_{[0,\infty)} f(t,w(t))\ \e^{-t} \, {{\mathrm d}}t.
\end{equation}
Since a pseudopath is a measure, convergence of pseudopaths is convergence of measures. Let $\Xi$ be the space of all pseudopaths. Endowed with the Meyer-Zheng topology, $\Xi$ is a Polish space {(see, for example, \cite[Section~4]{Kurtz1991a})}.

Let $v_N\colon\, [0,\infty) \to \cM_{\leq 1}$ denote the path of $Y^N$ and $v\colon\, [0,\infty) \to \cM_{\leq 1}$ the path of $Y$. For $f \in C_b(\Xi)$, define
\begin{equation}
\psi_{v_N}(f) = \int_{[0,\infty)} f(t, v_N(t))\ \e^{-t} \, {{\mathrm d}}t, \qquad \psi_v(f) = \int_{[0,\infty)} f(t, v(t))\ \e^{-t} \, {{\mathrm d}}t.
\end{equation}
Suppose that $f(t,x) = 1_{t \in [a,b]}\bar{f}(x)$, with $\bar{f}\colon\, \cM_{\leq 1} \to \R$ bounded and continuous and $0 \leq a < b < \infty$. The set of all functions of this form generates $C_b(\Xi)$. Write
\begin{equation}
\psi_{v_N}(f) = \int_a^b \bar{f}(v_N(t))\ \e^{-t} \, {{\mathrm d}}t, \qquad \psi_v(f) = \int_a^b \bar{f}(v(t))\ \e^{-t} \, {{\mathrm d}}t.
\end{equation}
To prove convergence in the Meyer-Zheng topology, we need to show that
\begin{equation}
\lim_{N \to \infty} \E[\psi_{v_N}(f)] = \E[\psi_{v}(f)] \qquad \forall\,f \in C_b(\Xi),
\end{equation}
which is equivalent to
\begin{equation} 
\lim_{N \to \infty} \E \left[ \int_a^b \left|\bar{f}(Y^N(t)) - \bar{f}(Y(t))\right|\,\e^{-t} \, {{\mathrm d}}t \right] = 0
\qquad \forall\,\bar{f} \in C_b(\cM_{\leq 1}),\, 0 \leq a < b < \infty.
\end{equation}

Suppose that $\bar{f}$ is Lipschitz continuous with Lipschitz constant $C \in \R$ for a given distance $d$ on $\cM_{\leq 1}$. By the Stone-Weierstrass theorem, the space of polynomial functions on the set of closed intervals in $[0,1]^N$ is dense in the space of continuous functions on $[0,1]^N$. Furthermore, the space of Lipschitz functions contains the space of polynomial functions. This means that any result for Lipschitz functions can be generalised to continuous $\bar{f}$. We find that
\begin{equation}
\begin{aligned}
\E\left[\int_a^b \big|\bar{f}(Y^N(t)) - \bar{f}(Y(t))\big|\,\e^{-t} \, {{\mathrm d}}t \right]
&\leq C\,\E\left[ \int_a^b d(Y^N(t),Y(t))\, \e^{-t} \, {{\mathrm d}}t \right]\\ 
&= C  \int_a^b \E\big[d(Y^N(t),Y(t))\big]\, \e^{-t} \,{{\mathrm d}}t,
\end{aligned}
\end{equation}
where we use Fubini's theorem to interchange the integral and the expectation. Thus, if we manage to prove that
\begin{equation}
\label{dzerotarget}
\lim_{N \to \infty} \E\big[d(Y^N(t),Y(t))\big] = 0 \text{ uniformly in $t$ over compact subsets of $(0,\infty)$},
\end{equation}
then we have proven that $Y^N$ converges to $Y$ in the Meyer-Zheng topology.  {We will choose for $d$ a suitable $l^2$-metric and establish \eqref{dzerotarget} in Section \ref{sec:deflem}.}


\subsection{Examples satisfying Assumption \ref{ass}}
\label{subsec:tail}

In this section we will identify examples satisfying Assumption \ref{ass} using tail and moment estimates. We will need  the following conditions:
\begin{itemize}
\item[(I)]
There are $\alpha,\beta \in (0,\infty)$ such that
\begin{equation}
\label{cond1}
\sum_{k\in\N} \mu_k\, r_k^{-\alpha} < \infty, \qquad \sum_{k\in\N} \mu_k\,r_k^{\beta} < \infty. 
\end{equation}
\item[(II)]
There are $\gamma,\delta \in (0,1]$ such that
\begin{equation}
\label{cond2}
\limsup_{\epsilon \downarrow 0} \epsilon^{-(1-\gamma)} \sum_{k\in\N} \mu_k\,1_{\{\mu_k \leq \epsilon\}} < \infty, 
\qquad
\limsup_{\epsilon \downarrow 0} \epsilon^{\delta} \sum_{k\in\N} 1_{\{\mu_k > \epsilon\}} < \infty. 
\end{equation}
\item[(III)]
There is a $\chi>1$ such that 
\begin{equation}
\label{cond3}
\mu_k\asymp k^{-\chi}, \qquad k \to \infty.
\end{equation} 
\end{itemize}

\begin{lemma}
\label{properties}
{Recall the definition of $(R_i)_{i \in \N}$ from Section \ref{sec:mr}. For $N \in \N$, let}
\begin{equation}
M^-_N = \min_{i \in [N]} R_i, \quad M^+_N = \max_{i \in [N]} R_i, \quad 
Z_N = {|N_R|.}
\end{equation}
Write $\P$ and $\E$ to denote probability and expectation with respect to the law $\P^{\otimes\N}$ of $(R_i)_{i\in\N}$.\\[0.2cm] 
If Condition\ {\rm(I)} holds, then
\begin{itemize}
\item[{\rm (1)}]
$\lim_{\epsilon \downarrow 0} \liminf_{N\to\infty} \P(M^-_N N^{1/\alpha} \geq \epsilon  ) =1$.
\item[{\rm (2)}]
$\lim_{\epsilon \downarrow 0} \liminf_{N\to\infty} \P( M^+_NN^{-1/\beta} \leq \epsilon^{-1}) =1$.
\end{itemize}
If Condition\ {\rm(II)} holds, then
\begin{itemize}
\item[{\rm (3)}]
$\lim_{\epsilon \downarrow 0} \liminf_{N\to\infty} \P( Z_NN^{-(\gamma \vee \delta)} \leq \epsilon^{-1}) = 1$.
\end{itemize}
If Condition\ {\rm(III)} holds, then
\begin{itemize}
\item[{\rm (4)}] 
$\lim_{\epsilon \downarrow 0} \liminf_{N\to\infty} \P( Z_NN^{-\frac{\upsilon}{\chi -1}} \leq \epsilon^{-1}) = 1\,\, \forall\, v>1$.
\end{itemize}
\end{lemma}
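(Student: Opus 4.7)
\medskip

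\textbf{Proof plan for Lemma \ref{properties}.} The plan is to reduce each item to a union-bound-plus-Markov argument on the empirical quantities, using the moment/count hypotheses to convert the Markov estimate into the stated polynomial scaling in $N$.

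\emph{Parts \textup{(1)} and \textup{(2)}.} For the minimum I would start from the union bound $\P(M_N^- < a)\leq N\,\P(R_1<a)$ and apply Markov to the negative moment:
\[
\P(R_1<a)=\P(R_1^{-\alpha}>a^{-\alpha})\leq a^{\alpha}\,\sum_{k\in\N}\mu_k r_k^{-\alpha},
\]
which is finite by Condition (I). Choosing $a=\epsilon N^{-1/\alpha}$ gives $\P(M_N^-<\epsilon N^{-1/\alpha})\leq C_\alpha \epsilon^{\alpha}$ uniformly in $N$, hence $\liminf_N\P(M_N^- N^{1/\alpha}\geq\epsilon)\geq 1-C_\alpha\epsilon^\alpha\to 1$ as $\epsilon\downarrow 0$. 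Part (2) is symmetric: use $\P(M_N^+>b)\leq N\,\P(R_1>b)\leq Nb^{-\beta}\sum_k\mu_k r_k^\beta$ and set $b=\epsilon^{-1}N^{1/\beta}$.

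\emph{Part \textup{(3)}.} Since $Z_N=\sum_{k\in\N}1_{\{\hat n_k^N\geq 1\}}$, one has
\[
\E[Z_N]=\sum_{k\in\N}\bigl[1-(1-\mu_k)^N\bigr]\leq \sum_{k\in\N}\min(N\mu_k,1).
\]
I would split the sum at the threshold $\mu_k\leq 1/N$. The small-$\mu_k$ piece is at most $N\sum_{\mu_k\leq 1/N}\mu_k\leq C\,N\cdot N^{-(1-\gamma)}=C N^{\gamma}$ by the first half of Condition (II) applied with $\epsilon=1/N$. The large-$\mu_k$ piece is exactly $|\{k:\mu_k>1/N\}|\leq C N^{\delta}$ by the second half. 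Therefore $\E[Z_N]\leq 2C N^{\gamma\vee\delta}$, and Markov's inequality gives $\P(Z_N>\epsilon^{-1}N^{\gamma\vee\delta})\leq 2C\epsilon$ uniformly in $N$, which is (3).

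\emph{Part \textup{(4)}.} I would deduce this from (3) by showing that Condition (III) implies Condition (II) with $\gamma=\delta=1/\chi$. A direct computation using $\mu_k\asymp k^{-\chi}$ gives $\{k:\mu_k\leq\epsilon\}=\{k\gtrsim\epsilon^{-1/\chi}\}$, so
\[
\sum_{\mu_k\leq\epsilon}\mu_k\asymp\sum_{k\gtrsim\epsilon^{-1/\chi}}k^{-\chi}\asymp\epsilon^{(\chi-1)/\chi},\qquad |\{k:\mu_k>\epsilon\}|\asymp\epsilon^{-1/\chi},
\]
which are exactly the bounds (II) with $\gamma=\delta=1/\chi\in(0,1)$. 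Hence (3) yields $\P(Z_N\leq\epsilon^{-1}N^{1/\chi})\to 1$, and since $1/\chi\leq\upsilon/(\chi-1)$ whenever $\upsilon>1$ and $\chi>1$ (equivalent to $(\upsilon-1)\chi\geq -1$), the event $\{Z_N\leq\epsilon^{-1}N^{1/\chi}\}$ is contained in the event $\{Z_N\leq\epsilon^{-1}N^{\upsilon/(\chi-1)}\}$, giving (4).

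There is no serious obstacle; the only point deserving attention is the splitting threshold in (3), which must be chosen so that both halves of Condition (II) contribute a power of $N$ strictly less than one in the polynomial rate, and in (4) the verification that Condition (III) indeed yields admissible exponents $\gamma,\delta\in(0,1]$ in Condition (II).
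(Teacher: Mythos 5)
Your proposal is correct. Parts (1)--(3) follow essentially the paper's argument: the paper bounds the min/max via independence, $\P(M_N^-\geq a)=\P(R_1\geq a)^N$, combined with Markov's inequality, whereas you use a union bound plus Markov, which gives the same uniform-in-$N$ estimate $C\epsilon^\alpha$ (resp.\ $C\epsilon^\beta$); and your treatment of (3) — the identity $\E[Z_N]=\sum_k[1-(1-\mu_k)^N]$, splitting at $\mu_k\le 1/N$, and Markov — is exactly the paper's proof. Part (4) is where you genuinely diverge: the paper follows a direct argument (after \cite{ZK01}), splitting the sum at $N_0=\lfloor N^{\upsilon/(\chi-1)}\rfloor$ and showing the tail $\sum_{k>N_0}[1-(1-\mu_k)^N]$ is bounded, which yields $\E[Z_N]\le C N^{\upsilon/(\chi-1)}$; you instead verify that Condition (III) implies Condition (II) with $\gamma=\delta=1/\chi\in(0,1)$ (the finitely many small-$k$ indices not governed by the asymptotics $\mu_k\asymp k^{-\chi}$ contribute only $O(1)$ to the count and $O(\epsilon)$ to the weighted sum, so this is sound), and then invoke (3) together with the elementary inequality $1/\chi\le\upsilon/(\chi-1)$ for $\upsilon>1$, $\chi>1$. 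Your route is shorter, avoids the separate tail computation, and in fact yields the sharper bound $\E[Z_N]=\bigo(N^{1/\chi})$, which strictly improves on the paper's exponent $\upsilon/(\chi-1)$ and immediately implies item (4) by monotonicity of the event; the paper's direct argument, on the other hand, is self-contained and does not require checking that (III) forces (II). The only cosmetic caveat is that your Condition (II) bounds, being limsup statements as $\epsilon\downarrow 0$, hold only for all sufficiently large $N$ rather than "uniformly in $N$", but this is exactly what the $\liminf_{N\to\infty}$ in the statement requires.
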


\begin{proof}
(1) Estimate, for $\epsilon>0$,
\begin{equation}
\begin{aligned}
&\P(M^-_N \geq \epsilon N^{-1/\alpha}) = \P\big(R_i \geq \epsilon N^{-1/\alpha} \,\, \forall\, i \in [N]\big) 
= \P(R_1 \geq \epsilon N^{-1/\alpha})^N\\ 
&= \big(1-\P((1/R_1)^\alpha > \epsilon^{-\alpha} N)\big)^N \geq \left(1-\epsilon^\alpha N^{-1} \E[(1/R_1)^\alpha]\right)^N
\to \exp(-\epsilon^\alpha \E[(1/R_1)^\alpha]).
\end{aligned}
\end{equation}
By \eqref{cond1}, $\E[(1/R_1)^\alpha]<\infty$, and so the claim follows after letting $\epsilon \downarrow 0$.

\medskip\noindent 
(2) Estimate, for $\epsilon>0$,
\begin{equation}
\begin{aligned}
&\P(M^+_N \leq \epsilon^{-1} N^{1/\beta}) = \P\big(R_i \leq \epsilon^{-1} N^{1/\beta} \,\, \forall\, i \in [N]\big) 
= \P(R_1 \leq \epsilon^{-1} N^{1/\beta})^N\\ 
&= \big(1-\P(R_1^\beta > \epsilon^{-\beta} N)\big)^N \geq \left(1- \epsilon^\beta N^{-1}\E[(R_1)^\beta]\right)^N
\to \exp(-\epsilon^\beta \E[(R_1)^\beta]).
\end{aligned}
\end{equation}
By \eqref{cond1}, $\E[(R_1)^\beta]<\infty$, and so the claim follows after letting $\epsilon \downarrow 0$.

\medskip\noindent 
(3) Estimate
\begin{equation}
\label{Neps}
\P(N^{-(\gamma \vee \delta)} {Z_N} \leq \epsilon^{-1}) = 1 - \P(N^{-(\gamma \vee \delta)}  {Z_N} > \epsilon^{-1})
\geq 1 - \epsilon N^{-(\gamma \vee \delta)} \E[Z_N].
\end{equation}
Write $Z_N = \sum_{i \in [N]} 1_{\{R_i \neq R_j\,\forall\, 1 \leq j < i\}}$, and compute
\begin{equation}
\begin{aligned} \label{eq:EZ}
&\E[Z_N] = \sum_{i \in [N]} \P(R_i \neq R_j\, \forall\, 1 \leq j < i)\\ 
&= \sum_{i \in [N]} \sum_{k \in \N}
\P(R_i = r_k)\, \P(R_i \neq R_j\, \forall\, 1 \leq j < i \mid R_i = r_k)\\
&= \sum_{i \in [N]} \sum_{k \in \N} \mu_k (1-\mu_k)^{i-1} = \sum_{k \in \N} \mu_k\,\frac{1-(1-\mu_k)^N}{1-(1-\mu_k)}
= \sum_{k \in \N} [1-(1-\mu_k)^N].
\end{aligned}
\end{equation}
Define
\begin{equation}
A_N = \sum_{k \in \N} \mu_k\,1_{\{\mu_k \leq 1/N\}}, \qquad B_N = \sum_{k \in \N} 1_{\{\mu_k > 1/N\}}.
\end{equation}
Then 
\begin{equation}
\E[Z_N] \leq  c_N N A_N + B_N
\end{equation} 
with $c_N = \sup_{x \in (0,1/N]} [-\frac{1}{x}\log (1-x)] \to 1$ as $N\to\infty$. By \eqref{cond2}, $\limsup_{N \to \infty} N^{1-\gamma} A_N $ $< \infty$ and $\limsup_{N \to \infty} N^{-\delta} B_N < \infty$. Hence $\limsup_{N\to\infty} N^{-(\gamma \vee \delta)} \E[Z_N] < \infty$, and so the claim follows after letting $\epsilon \downarrow 0$ in \eqref{Neps}.

\medskip\noindent 
(4) The proof of this part is inspired by the proof presented in \cite[Lemma 5]{ZK01}.  Let $\upsilon >1$. Then $\frac{\upsilon}{\chi -1}> \frac{1}{\chi-1}$, and hence $\chi-\frac{\chi -1}{\upsilon}>1$. Let $N_0 = \lfloor {N^\frac{\upsilon}{\chi-1}} \rfloor$. 
Then {from \eqref{eq:EZ} we have}
\begin{equation} \label{eq:zkstep}
\begin{aligned}
{\E[Z_N]}& = \sum_{k \in \N} [1-(1-\mu_k)^N]\\ 
&= \sum_{{1 \leq k \leq  N_0}} [1-(1-\mu_k)^N] + \sum_{k \geq N_0+1} [1-(1-\mu_k)^N]\\
&\leq N_0 +\sum_{k \geq N_0+1} [1-(1-\mu_k)^N]. 
\end{aligned}
\end{equation}
Note that, for ${k} \geq N_0+1= \lfloor {N^\frac{\upsilon}{\chi-1}} \rfloor +1$, {by using \eqref{cond3} we see that} there are $C_1,C_2>0$ such that
\begin{equation}
1-(1-\mu_k)^N  \leq 1-\left(1- C_1\frac{1}{k^\chi}\right)^{k^{\frac{\chi -1}{\upsilon}}} 
\leq C_2 \frac{1}{k^{\chi-\frac{\chi -1}{\upsilon}}}.
\end{equation}
{Since $\chi-\frac{\chi -1}{\upsilon} >1$, this implies that $\sum_{k \geq N_0+1} [1-(1-\mu_k)^N] < \infty$. Using the latter and the definition of $N_0$ in \eqref{eq:zkstep}, we see that there is a $C_3 >0$ such that $E[Z_N] \leq C_3 N^{\frac{\chi}{\upsilon -1}}$. Using this bound and the same argument as in \eqref{Neps}, we get
\begin{equation*}
\label{eq:Neps1} 
\P(N^{-\frac{\upsilon}{\chi-1}} {Z_N} \leq \epsilon^{-1}) \geq 1 - \epsilon N^{-\frac{\upsilon}{\chi-1}} \E[Z_N] \geq 1- \epsilon.\end{equation*} 
}
The claim follows after letting $\epsilon \downarrow 0$.
\end{proof}

\begin{corollary}
\label{momcond}
Assumption~\ref{ass} is satisfied if either of the following is true: 
\begin{itemize} 
\item[(a)] 
Conditions\ {\rm(I)} and\ {\rm(II)} hold with 
\begin{equation}
\label{parass}
2(\gamma \vee \delta) + \frac{1}{\beta} < 1 - \frac{3}{\alpha}. 
\end{equation}
\item[(b)] 
Conditions\ {\rm(I)} and\ {\rm(III)} hold with 
\begin{equation}
\label{parass2}
\frac{1}{\chi-1} + \frac{1}{\beta} < 1 - \frac{3}{\alpha}. 
\end{equation}
\end{itemize}
\end{corollary}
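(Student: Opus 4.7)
Assumption~\ref{ass} has two parts, and the plan tackles each in turn. Both conditions~(a) and~(b) of the corollary force $\alpha > 3$, so Condition~(I) gives $\sum_k \mu_k r_k^{-\alpha} < \infty$; splitting the sum at $r_k = 1$ yields $\sum_k \mu_k r_k^{-1} \leq 1 + \sum_{r_k < 1} \mu_k r_k^{-\alpha} < \infty$, which together with the countability hypothesis is Assumption~\ref{ass}(1). The substantive work is Assumption~\ref{ass}(2), the a.s.\ convergence of the ratio in~\eqref{eq:key}.

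The strategy is to dominate that ratio by the three extremal statistics $M^-_N$, $M^+_N$, $Z_N$ whose tail behaviour Lemma~\ref{properties} quantifies. From the elementary pointwise estimates
\begin{equation*}
\sum_{k \in N_R} r_k \leq Z_N M^+_N, \qquad \sum_{k \in N_R} r_k^{-2} \leq Z_N (M^-_N)^{-2}, \qquad \min_{k \in N_R} r_k = M^-_N,
\end{equation*}
the ratio in~\eqref{eq:key} is bounded by $Z_N^2 M^+_N / (N (M^-_N)^3)$. Substituting the high-probability bounds $M^-_N \geq \epsilon N^{-1/\alpha}$, $M^+_N \leq \epsilon^{-1} N^{1/\beta}$, and either $Z_N \leq \epsilon^{-1} N^{\gamma \vee \delta}$ (Lemma~\ref{properties}(3) under Condition~(II)) or $Z_N \leq \epsilon^{-1} N^{\upsilon/(\chi-1)}$ for any $\upsilon > 1$ (Lemma~\ref{properties}(4) under Condition~(III)), I obtain a polynomial bound $C(\epsilon) N^{e}$ with exponent $e = 2(\gamma \vee \delta) + 1/\beta + 3/\alpha - 1$ in case~(a), which is negative by~\eqref{parass}, and $e = 2\upsilon/(\chi-1) + 1/\beta + 3/\alpha - 1$ in case~(b), which becomes negative for $\upsilon$ chosen sufficiently close to $1$ under~\eqref{parass2}.

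To promote Lemma~\ref{properties}'s in-probability bounds to the required a.s.\ convergence, I will run Borel-Cantelli along the geometric subsequence $N_j = 2^j$. A closer look at the proofs shows that each tail inequality is of polynomial form $\P(\cdot \geq \epsilon^{-1} x_N) \leq C \epsilon^s$ for some $s > 0$, so the choice $\epsilon_j = 2^{-j\rho}$ with $\rho > 0$ produces summable failure probabilities. Since $N \mapsto M^-_N$ is decreasing while $N \mapsto M^+_N$ and $N \mapsto Z_N$ are increasing, monotonicity lifts the a.s.\ subsequence bounds to all $N$ at the cost of an arbitrary extra factor $N^\eta$; strict negativity of $e$ absorbs this loss and delivers the a.s.\ convergence of the ratio to zero.

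The main obstacle is the crudeness of the estimate $\sum_{k\in N_R} r_k^{-2} \leq Z_N (M^-_N)^{-2}$, which is what inflates the $Z_N$-exponent by a factor of two and is responsible for the coefficient $2(\gamma \vee \delta)$ in~\eqref{parass} (respectively $2\upsilon/(\chi-1)$ in case~(b)). In typical examples (e.g., $r_k = k$) one has $\sum_{k \in N_R} r_k^{-2} = O(1)$, which suggests that a moment-based control using $\E[\sum_{k \in N_R} r_k^{-2}] = \sum_k r_k^{-2}[1-(1-\mu_k)^N]$ combined with the moment hypotheses in Condition~(I) would sharpen~\eqref{parass} and~\eqref{parass2}. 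A secondary technical point is coordinating the Borel-Cantelli estimates for $M^-_N$, $M^+_N$, $Z_N$, which are not independent, so that all three polynomial bounds hold simultaneously with summable failure probability.
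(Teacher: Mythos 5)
Your route is essentially the paper's: the same reduction of \eqref{eq:key} to the bound $Z_N^2 M_N^+/(N(M_N^-)^3)$ as in \eqref{201}, followed by the polynomial bounds behind Lemma~\ref{properties}. Your upgrade from the in-probability statements to almost sure ones---Borel--Cantelli along $N_j=2^j$ with $\epsilon_j=2^{-j\rho}$, using the Markov/union-bound form of the tail estimates inside the proofs of Lemma~\ref{properties}, then monotonicity of $M_N^-$, $M_N^+$, $Z_N$ at the price of an $N^{O(\rho)}$ factor absorbed by the strict inequality---is a more explicit version of what the paper simply asserts (``a.s.\ there exist $\epsilon_i$ and $N_i$''), and it works; the lack of independence is immaterial since only the first Borel--Cantelli lemma and a union bound are used. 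Your verification of Assumption~\ref{ass}(1) via $\alpha>3$ is also correct (the paper leaves it implicit). Part (a) is therefore fine.

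The genuine gap is in part (b). The exponent $e=2\upsilon/(\chi-1)+1/\beta+3/\alpha-1$ is the honest consequence of $Z_N^2\leq \epsilon^{-2}N^{2\upsilon/(\chi-1)}$, but it is \emph{not} negative under \eqref{parass2}, which controls only a single factor $1/(\chi-1)$. For instance, $\chi=3$, $\alpha=\beta=100$ satisfies \eqref{parass2} ($0.5+0.01<0.97$), yet $2/(\chi-1)+1/\beta=1.01>1-3/\alpha$, so $e>0$ however close $\upsilon$ is to $1$, and your bound $C N^{e+O(\rho)}$ does not tend to zero. Hence the final step of (b) is unjustified; what your argument actually proves is the corollary with the stronger hypothesis $2/(\chi-1)+1/\beta<1-3/\alpha$. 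It is worth noting that at the corresponding step the paper's own display records the $Z_N^2$ contribution as $N^{\upsilon/(\chi-1)}$ rather than $N^{2\upsilon/(\chi-1)}$, i.e.\ the square is dropped, so the paper's proof as written supports only the same strengthened inequality. To reach \eqref{parass2} as stated one needs genuinely sharper control than $\sum_{k\in N_R}r_k^{-2}\leq Z_N(M_N^-)^{-2}$ together with $Z_N\lesssim N^{\upsilon/(\chi-1)}$---for example a moment bound on $\sum_{k\in N_R}r_k^{-2}$ via $\E[\sum_{k\in N_R}r_k^{-2}]=\sum_k r_k^{-2}[1-(1-\mu_k)^N]$, as you suggest but do not carry out; note that even the sharp order $\E[Z_N]\asymp N^{1/\chi}$ under (III) alone gives the coefficient $2/\chi$, which still exceeds $1/(\chi-1)$ for $\chi>2$. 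As it stands, part (b) of your proposal is not established.
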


\begin{proof}
Note that, {by the definition of $N_R$ from Section \ref{sec:mr} and $M_N^-$, $M_N^+$ from Lemma~\ref{properties}, we have $\P^{\otimes\N}\text{-a.s.}$}
\begin{equation}
\label{201}
\frac{\sum_{k\in N_R} r_k\sum_{k\in N_R} r_k^{-2}}{N \min_{k\in N_R} r_k}
\leq \frac{Z_N^2 M_N^+}{N(M_N^-)^3}.
\end{equation}
(a) Note that Lemma~\ref{properties}(1) implies that a.s.\ there exist $\epsilon_1>0$ and $N_1$ such that
\begin{equation}
M^-_N N^{1/\alpha} \geq \eps_1 \qquad \forall\,N>N_1.
\end{equation}
Likewise, we can find $N_2,N_3$ and $\epsilon_2,\epsilon_3$ by Lemma~\ref{properties}(2,3). It follows that a.s.\ for $N_0 = \max\{N_1,N_2,N_3\}$ and $\epsilon_0=\min\{\epsilon_1,\epsilon_2,\epsilon_3\}>0$,
\begin{equation}
\frac{Z_N^2 M_N^+}{N(M_N^-)^3} \leq \frac{N^{2(\gamma\vee\delta)} N^{1/\beta}}
{\epsilon_0^6 N^{1-3/\alpha}} \qquad \forall\, N>N_0.
\end{equation}
From this it easily follows that the right-hand side of \eqref{201} converges to zero a.s.\\
(b) Note that, as in part (a), using Lemma~\ref{properties}(1,2,4) we have that a.s.\ there are $\tilde{N}$ and $\tilde{\epsilon}$ such that
\begin{equation}
\frac{Z_N^2 M_N^+}{N(M_N^-)^3} \leq \frac{N^{\frac{\upsilon}{\chi-1}} N^{1/\beta}}{\tilde{\epsilon}^6 N^{1-3/\alpha}}
\qquad \forall\,N>\tilde{N}.
\end{equation}
From this it easily follows that the right-hand side of \eqref{201} converges to zero a.s. (recall that $v>1$ is arbitrary).
\end{proof}

The example given below Assumption~\ref{ass} follows from \eqref{parass2}. We close with two further examples for which \eqref{parass} and \eqref{parass2} are met.

\begin{example} 
{\rm (I) Pick $r_k=k$ and $\mu_k=p(1-p)^{k-1}$ for $k\in\N$ and some $0<p<1$. Then \eqref{cond1} holds for any $\alpha,\beta<\infty$ and \eqref{cond2} holds for any $\gamma,\delta>0$. Hence \eqref{parass} is met by taking $\alpha,\beta$ large enough and $\gamma,\delta$ small enough.\\
(II) Pick $r_k=k$ and $\mu_k=C(\chi) k^{-\chi}$ for $k\in\N$ and some $\chi>3$. Then \eqref{cond1} holds for any $\alpha<\infty$ and $\beta < \chi-1$ and \eqref{cond3} holds as well. Hence \eqref{parass2} is met by taking $\alpha >0$ large enough and $\beta <\chi-1$ close enough to $\chi-1$.}
\end{example}


\section{Proof of the main theorem} 

In Section~\ref{sec:deflem} we state a key lemma (Lemma~\ref{fourlemma} below) that implies Theorem~\ref{thm:main}. In Sections~\ref{sec:L1}--\ref{sec:L4} we prove this lemma. The proof is based on the construction of a \emph{Lyapunov function} together with a \emph{projection argument}, showing that the scaled dynamics rapidly contracts to the submanifold $\{s\,\P\colon\,s \in [0,1]\}$. Outside the submanifold the generator of the scaled dynamics diverges as $N\to\infty$. We show that inside the manifold the generator converges to that of the Fisher-Wright diffusion in \eqref{FWlim}.  Throughout this section, Assumption~\ref{ass} is in force.


\subsection{Definitions and a key lemma}
\label{sec:deflem}

Recall that we denoted by  $(R_i)_{i\in\N}$ the sequence of independent random variables with distribution $\P$, and  $\cE^N$ the empirical distribution of the first $N$ random variables, i.e., 
\begin {equation}
\cE^N = \sum_{k\in\N} n^N_k \delta_{r_k}, \qquad n^N_k = N^{-1} \sum_{i\in[N]} 1_{\{R_i=r_k\}}.
\end{equation} 
{Recall $N_R$ from Section \ref{sec:mr}}. The measure-valued process $Y^N$ can be represented as a vector-valued process $Y^N = (Y^N_k)_{k \in \N}$, also with at most $N$ non-zero elements. Each $Y^N_k$ may be viewed as the fraction of $Y^N$ that have the resampling rate $r_k$, and thus $Y^N_k >0$ if and only if $k \in N_R$.

Convergence of Markov processes is usually proven by showing that the infinitesimal generators converge (see \cite[Chapter 4, Section 8]{EK86}. For convenience we first reformulate the infinitesimal generator of $Y^N$. Let $f\colon\,\R^\N\to\R$ be three times partially differentiable, and let
\begin{equation}
\abs{f}_3 = \sup_{i,j,k\in\N} \left\| \frac{\partial^3}{\partial z_i\partial z_j\partial z_k} f \right\|_\infty.
\end{equation}
For $k\in\N$, let $X_{r_k}(t)$ be the number of individuals of type {$1$} with resampling rate $r_k$ at time $t$, and 
\begin{equation}
 Y_k^N(t) = \frac{X_{r_k}^N(Nt)}{N}
\end{equation}
the fraction of individuals at time $Nt$ of type $\h$ with resampling rate $r_k$. The random process $Y^N = (Y^N(t))_{t \geq 0}$ can now be viewed as $Y^N(t) = (Y^N_k(t))_{k\in\N}$ living on the state space
\begin{equation}
Q = \bclc{ y = (y_k)_{k\in\N} \colon \text{$0 \leq y_k \leq n_k^N\,\,\forall\,k\in\N$}}.
\end{equation}

For $k\in\N$, we can view \eqref{eq:operT} as
\begin{equation}
T^{k, \Delta}y = y + \Delta \frac{e_k}{N},
\end{equation}
with $e_k$ the unit vector in direction $k$, and the transition rates in \eqref{eq:operR} as 
\begin{equation}
R^{k, -1}(y) = r_k y_k \left( \sum_{l \in \N} (n_l^N - y_l) \right),  \qquad
R^{k, +1}(y) = r_k (n_k^N - y_k) \left( \sum_{l \in \N} y_l \right), 
\end{equation}
and the generator $L_N$ in \eqref{eq:genL} as 
\begin{equation}
(L_Nf)(y) = \sum_{k\in\N} \sum_{\Delta \in \{-1,+1\}} R^{k, \Delta}(y) \left[f(T^{k, \Delta}y) - f(y)\right] N^2.
\end{equation}
A straightforward Taylor expansion yields
\begin{equation}
\label{LNdef}
\begin{aligned}
(L^N f)(y) &= -N \sum_{k\in N_R} r_k \left(y_k - n^N_k \sum_{l \in N_R} y_l\right) f_k(y)\\
&\quad + \frac{1}{2} \sum_{k\in N_R} r_k \left(y_k - 2y_k \sum_{l \in N_R} y_l 
+ n^N_k \sum_{l \in N_R} y_l\right) f_{kk}(y) + \bigo\bbbclr{N^{-1}\abs{f}_3\sum_{k\in N_R}r_k},
\end{aligned}
\end{equation}
where $f_k(y) = (\partial/\partial y_k) f(y)$  and $f_{kk}(y) = (\partial/\partial y_k)^2 f(y)$.

We see that the first term in the infinitesimal generator of the process $Y^N$ has a prefactor $N$, and therefore diverges as $N \to \infty$ on most of the state space. However, we will be able to show that $Y^N$ is driven to a subspace on which the first term vanishes and the infinitesimal generator does converge. To account for this properly, we will prove that $Y^N$ converges in the Meyer-Zheng topology (from Section \ref{subsec: MZ}). We need to introduce some further quantities. Let
\begin{equation}
D^N = \left(\sum_{k\in N_R} \frac{n^N_k}{r_k}\right)^{-1}, \qquad D = \left(\sum_{k\in\N}\frac{\mu_k}{r_k}\right)^{-1}.
\end{equation} 
By Assumption~\ref{ass}, the strong law of large numbers gives (recall \eqref{nNkdef})
\begin{equation}
\label{100}
\lim_{N\to\infty} D^N = D \quad \P^{\otimes\N}\text{-a.s.}
\end{equation}
Define 
\begin{equation}
S^N(t) = \sum_{k\in N_R} Y^N_k(t), \qquad \˘S^N(t) = D^N \sum_{k\in N_R} \frac{1}{r_k} Y^N_k(t).
\end{equation}
Let $S=(S(t))_{t \geq 0}$ be a Fisher-Wright diffusion with diffusion constant $D$, defined on the same probability space as $Y^N$ {(we will specify the  coupling later)}. Using the triangle inequality, we have
\begin{equation}
\label{triangle}
\begin{aligned}
\bnorm{Y^N(t)-S(t)\P}_2 &\leq \bnorm{Y^N(t)-S^N(t)n^N}_2 + \bnorm{S^N(t)n^N-\˘S^N(t)n^N}_2\\
&\quad + \bnorm{\˘S^N(t)n^N-\˘S^N(t)\P}_2 + \bnorm{\˘S^N(t)\P - S(t)\P}_2,
\end{aligned}
\end{equation}
where $\|\cdot\|_2$ is the $\ell_2$-norm for vectors. We will bound the right-hand side via a sequence of lemmas summarised as follows. 

\begin{lemma} 
\label{fourlemma}
$\P^{\otimes\N}$-a.s.\ the expectation of each of the four terms in the right-hand side of \eqref{triangle} tends to zero as $N\to\infty$ uniformly in $t$ on compacts in $(0,\infty)$, where for the last term this holds for an appropriate coupling of $\˘S^N=(\˘S^N(t))_{t \geq 0}$ and $S=(S(t))_{t \geq 0}$.
\end{lemma}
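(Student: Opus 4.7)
The plan is to bound each of the four terms on the right-hand side of \eqref{triangle} by a distinct technique. The first two terms encode the contraction of $Y^N$ onto the empirical submanifold $\{s n^N : s\in[0,1]\}$ and form the novel part of the argument, while the third is essentially the strong law of large numbers in $\ell^2$ and the fourth is a standard diffusion approximation.

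For the first term, which measures the squared distance of $Y^N(t)$ from the submanifold, I would construct a Lyapunov function of the form
\begin{equation*}
V(y) = \sum_{k\in N_R} c_k \bclr{y_k - n^N_k\,\sigma(y)}^2, \qquad \sigma(y)=\sum_{l\in N_R} y_l,
\end{equation*}
with weights $c_k$ (of order $n^N_k/r_k$ or $n^N_k$, to be tuned). Using the explicit expansion \eqref{LNdef} of the generator, a direct calculation should give an inequality of the type $L^N V(y) \leq - \lambda_N V(y) + \eps_N$, where $\lambda_N$ grows linearly in $N$ thanks to the leading $N$-term in \eqref{LNdef} and $\eps_N$ is a small remainder arising from the quadratic second term and the $\bigo(N^{-1}|f|_3 \sum_k r_k)$ residue. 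Gronwall's inequality applied to $\E[V(Y^N(t))]$ then forces an exponentially fast equilibration to an $O(\eps_N/\lambda_N)$ residue, uniformly in $t$ on compacts of $(0,\infty)$, and Lemma~\ref{properties} is used to convert this weighted $\ell^2$ bound into one on the unweighted $\ell^2$ distance.

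For the second term, I would exploit the identity
\begin{equation*}
S^N(t)-\˘S^N(t) = \sum_{k\in N_R}\bclr{1-\tfrac{D^N}{r_k}} Y^N_k(t) = \sum_{k\in N_R}\bclr{1-\tfrac{D^N}{r_k}}\bclr{Y^N_k(t)-n^N_k S^N(t)},
\end{equation*}
where the second equality uses $\sum_k(1-D^N/r_k)n^N_k=0$, which is the very definition of $D^N$. Cauchy--Schwarz reduces this term to the first one, up to a constant $(\sum_k (1-D^N/r_k)^2/n^N_k)^{1/2}$, and again Assumption~\ref{ass}(2) together with Lemma~\ref{properties} is what keeps this constant controlled. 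The third term equals $\˘S^N(t)\,\|n^N-\P\|_2 \leq \|n^N-\P\|_2$ and tends to zero $\P^{\otimes\N}$-a.s.\ by coordinate-wise SLLN combined with dominated convergence using Assumption~\ref{ass}(1). For the fourth term, I would couple $\˘S^N$ and $S$ via a Skorokhod-type construction built from the martingale part of $\˘S^N$, and verify that the generator of $\˘S^N$, obtained by applying \eqref{LNdef} to $f(y)=g(\˘S^N(y))$ with $\˘S^N(y)=D^N\sum_k y_k/r_k$, converges to the Fisher--Wright generator $D\,s(1-s)g''(s)$ on the submanifold; the off-submanifold correction is again controlled by Step~1 and \eqref{100}.

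The main obstacle will be the Lyapunov estimate. Because the generator \eqref{LNdef} contains terms of three different orders in $N$, the weights $c_k$ must be delicately tuned so that the $O(N)$ drift pulls $V$ back at rate $\lambda_N \asymp N$, while the $O(1)$ diffusive contribution and the $\bigo(N^{-1}|f|_3 \sum_{k\in N_R} r_k)$ error add up to a remainder that is genuinely $o(1)$. It is precisely at this bookkeeping step that the combination $(\sum_{k\in N_R} r_k)(\sum_{k\in N_R} r_k^{-2})/(N\min_{k\in N_R} r_k)$ appearing in Assumption~\ref{ass}(2) arises and explains why the left tail, the right tail, and the size of the support of $\P$ all need to be restricted.
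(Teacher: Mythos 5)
Your overall architecture coincides with the paper's: a Lyapunov/Gronwall bound for the first term, a reduction of the second term to the first, the strong law for the third, and generator convergence for the functional $\check{S}^N(y)=D^N\sum_k y_k/r_k$ (for which the $\bigo(N)$ drift in \eqref{LNdef} cancels) for the fourth. However, the step you yourself single out as the main obstacle --- tuning the weights $c_k$ --- is precisely where your candidate choices fail and where the paper's key trick lies. The paper takes $c_k=1/n^N_k$: then the dangerous cross term in $L^Nh$ equals $2N\bclr{\sum_{k\in N_R} r_k\Delta^N_k(y)}\bclr{\sum_{m\in N_R}\Delta^N_m(y)}$, which vanishes \emph{identically} because $\sum_{m\in N_R}n^N_m=1$ forces $\sum_m\Delta^N_m(y)=0$; this is what yields $L^Nh\le -2NM^-_N h+2\Sigma_N$ and hence the Gronwall bound. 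With your suggested weights $c_k= n^N_k$ or $n^N_k/r_k$ the cross term survives at order $N$, has no sign, and is not dominated by the negative term, so no inequality of the form $L^NV\le-\lambda_NV+\eps_N$ is obtained; also, since $1/n^N_k\ge 1$, the paper's weighted bound dominates the unweighted $\ell^2$ distance for free (Lemma~\ref{properties} plays no role in the proof of the lemma; it only serves to verify Assumption~\ref{ass}). Relatedly, in your second-term reduction the Cauchy--Schwarz pairing should be $\babs{\sum_k(1-D^N/r_k)\Delta^N_k}\le\bclr{\sum_k n^N_k(1-D^N/r_k)^2}^{1/2}\bclr{\sum_k\Delta^N_k{}^2/n^N_k}^{1/2}$, the second factor being exactly the Lyapunov quantity; the constant $\bclr{\sum_k(1-D^N/r_k)^2/n^N_k}^{1/2}$ you propose can be of order $\sqrt{N Z_N}$ (since $1/n^N_k$ may be as large as $N$), which destroys the factor $1/N$ gained from the Lyapunov bound, and the estimate then does not close under Assumption~\ref{ass}(2). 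The paper implements the same idea via the projection $\check{P}$, whose operator norm is bounded by $C\bclr{\sum_{k\in N_R}r_k^{-2}}^{1/2}$.

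For the fourth term your plan is under-specified at the genuinely delicate point: passing from convergence of generators to convergence of processes in a topology in which the statement can hold. The paper does not build a coupling from the martingale part of $\check{S}^N$; it applies Ethier--Kurtz (Chapter~4, Corollary~8.4) with $\eta_N(y)=D^N\sum_k y_k/r_k$, where the $L^1$ generator errors are controlled by the Lyapunov bound but contain $\e^{-2NM^-_Nt}$ and so are not uniform near $t=0$ --- this gives only finite-dimensional convergence, not Skorohod convergence. It then invokes Bogachev--Miftakhov to upgrade finite-dimensional convergence of real-valued processes to weak convergence in the Meyer--Zheng topology \emph{without any tightness argument}, and finally uses the Skorokhod representation theorem plus uniform boundedness to produce the coupling and the $L^2$ statement. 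Your ``Skorokhod-type construction built from the martingale part'' would need to be replaced by, or fleshed out into, an argument of this kind; as stated it is not a proof step.
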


Lemma~\ref{fourlemma} implies \eqref{dzerotarget} with $Y(t)=S(t)\P$ and $d$ the $L^2$-norm on $\cM_{\leq 1}$. It therefore settles the proof of Theorem \ref{thm:main}.


\subsection{First term}
\label{sec:L1}

\begin{proof}
The idea behind the proof is to find a Lyapunov function for the infinitesimal generator that rapidly decays to zero (for background on Lyapunov functions, see \cite[Chapter 4.6]{M08}). To this end, let $(c_k)_{k\in\N}$ be a sequence in $\R_+$ (to be determined later), and let $h\colon\,Q \to \R$ be defined by
\begin{equation}
h(y) = \sum_{m \in N_R} c_m \bbclr{y_m - n^N_m \sum_{l \in N_R} y_l}^2.
\end{equation}
We have
\begin{equation}
h_k(y)  = 2 \sum_{m \in N_R} c_m \left(\delta_{mk} - n^N_m\right) \bbclr{y_m - n^N_m \sum_{l \in N_R} y_l}
\end{equation}
and
\begin{equation}
h_{kk'}(y) = 2 \sum_{m \in N_R} c_m \left(\delta_{mk} - n^N_m\right) \left(\delta_{mk'} - n^N_m\right).
\end{equation}
Substituting these expressions into \eqref{LNdef}, we obtain 
\begin{equation}
\begin{aligned}
\mel (L^N h)(y)\\
&= - 2N \sum_{k \in N_R} c_k r_k \left(y_k - n^N_k \sum_{l \in N_R} y_l\right)^2\\
&\quad + 2N \sum_{k \in N_R} r_k \left(y_k - n^N_k \sum_{l \in N_R} y_l\right) 
\sum_{m \in N_R} c_m n^N_m \left(y_m - n^N_m \sum_{l \in N_R}  y_l\right) \\
&\quad + \sum_{k \in N_R} r_k \left(y_k - 2y_k \sum_{l \in N_R} y_l + n^N_k \sum_{l \in N_R} y_l\right) 
\sum_{m \in N_R}  c_m \left(\delta_{mk} - n^N_m\right) \left(\delta_{mk} - n^N_m\right).
\end{aligned}
\end{equation}
With the choice $c_k = 1/n^N_k$ for $k \in N_R$, the second term vanishes and, for $k \in N_R$,
\begin{equation}
\sum_{m \in N_R}  c_m \left(\delta_{mk} - n^N_m\right) \left(\delta_{mk} - n^N_m\right) 
=  \sum_{\substack{m\in N_R,\\ m \neq k}} n^N_m + \frac{1}{n^N_k}(1-n^N_k)^2 = \frac{(1-n^N_k)}{n^N_k}.
\end{equation}
Thus,
\begin{equation}
\begin{aligned}
(L^N h)(y) 
&= -2N \sum_{k \in N_R} \frac{r_k}{n^N_k} \left(y_k - n^N_k \sum_{l \in N_R} y_{l}\right)^2\\
&\qquad + \sum_{k \in N_R} \frac{r_k}{n^N_k} \bbbclr{y_k\bbclr{1- \sum_{l \in N_R} y_l} + (n^N_k - y_k) 
\sum_{l \in N_R} y_l} (1 - n^N_k)\\
&\leq -2N M^-_N h(y) + 2 \Sigma_N,
\end{aligned}
\end{equation}
where 
\begin{equation}
\label{MNSigNdef}
M^-_N = \min_{k \in N_R} r_k, \qquad \Sigma_N = \sum_{k \in N_R} r_k, 
\end{equation}
and we use the fact that $0\leq y_k\leq n^N_k$. Hence, putting 
\begin{equation}
g(t) = \E_y[h(Y^N(t))],
\end{equation} 
where $\E_y$ denotes expectation conditional on $Y^N(0)=y$, we obtain
\begin{equation}
g'(t) \leq -2N M^-_N g(t) + 2 \Sigma_N,
\end{equation}
and so, solving the differential inequality, we obtain
\begin{equation}
\label{bound}
\begin{aligned} 
\mel\E_y\big[\|Y^N(t)-S^N(t) n^N\|_2^2\big]\\
&= \E_y\left[\sum_{k \in N_R} \bbclr{Y^N_k(t) - n^N_k \sum_{{l \in N_R}} Y^N_l(t)}^2\right] \\
&\leq \E_y\left[\sum_{{k \in N_R}} \frac{1}{n^N_k}\bbclr{Y^N_k(t) - n^N_k \sum_l Y^N_l(t)}^2\right]  \\
&= g(t) \leq \frac{\Sigma_N}{N M^-_N} + \bbclr{g(0) -\frac{\Sigma_N}{N M^-_N}}\e^{-2N M^-_N t}.
\end{aligned}
\end{equation}
By Assumption~\ref{ass}, the first term in the right-hand side of \eqref{bound} converges to zero as $N\to\infty$ $\P^{\otimes\N}$-a.s. For the second term, note that
\begin{equation}
-n_k^N \leq  y_k-n^N_k \sum_{l \in N_R} y_l \leq n_k^N, \qquad k \in N_R,
\end{equation}
which implies
\begin{equation}
g(0) = \sum_{k \in N_R} \frac{1}{n^N_k} \bbclr{y_k-n^N_k \sum_{l \in N_R} y_l}^2 \leq 1. 
\label{eq:nkykb}
\end{equation} 
Since $\e^{-x} \leq x^{-1}$, $x>0$, and $\limsup_{N\in\N} (1/\Sigma_N) < \infty$ $\P^{\otimes\N}$-a.s., it follows that also the second term in the right-hand side of \eqref{bound} converges to zero as $N\to\infty$ $\P^{\otimes\N}$-a.s.\ and uniformly in $t$ on compacts in $(0,\infty)$. 
\end{proof}


\usetikzlibrary{calc,angles,quotes,arrows}
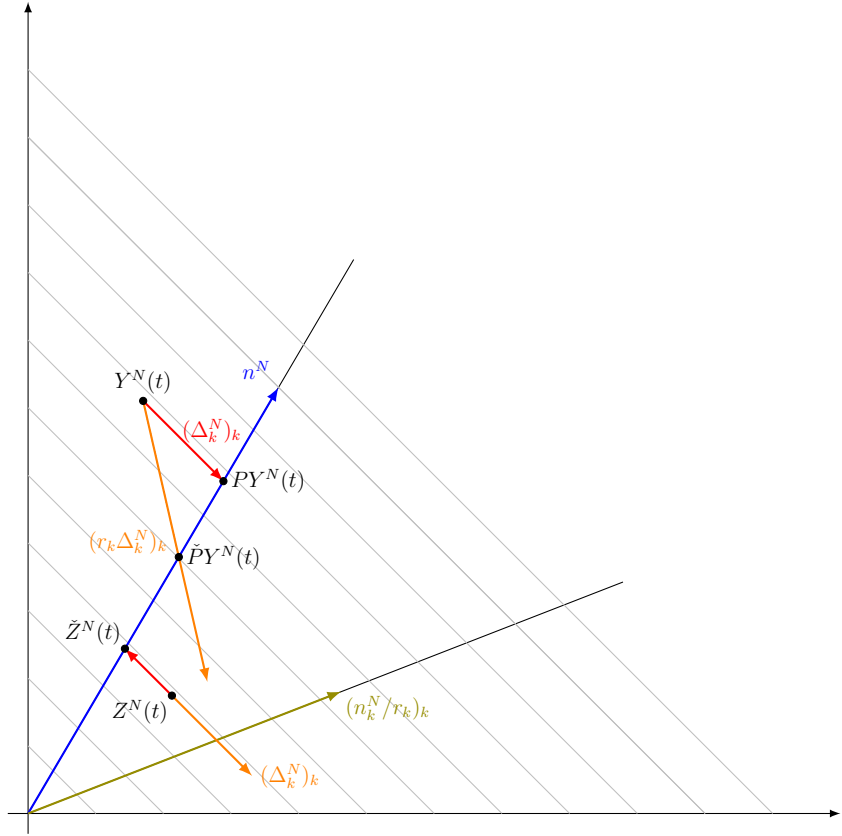
\begin{figure}[htbp]
\centering
\begin{tikzpicture}[>= latex, scale=9, every node/.style={scale=.7}]
\coordinate (origin) at (0,0);
\def\nx{0.37}
\def\ny{0.63}
\def\rx{0.8}
\def\ry{3.5}
\def\YNx{0.17}
\def\YNy{0.61}
\coordinate (nk) at (\nx,\ny);
\coordinate (YN) at (\YNx,\YNy);
\coordinate (ZN) at (\YNx/\rx,\YNy/\ry);
\coordinate (ZcheckN) at (\nx*\YNx/\rx+\nx*\YNy/\ry,\ny*\YNx/\rx+\ny*\YNy/\ry);
\pgfmathsetmacro{\SN}{\YNx+\YNy}
\coordinate (PY) at ($(origin)!\SN!(nk)$);
\newdimen\PYcX
\newdimen\PYcY
\pgfextractx{\PYcX}{\pgfpointanchor{PY}{center}}
\pgfextracty{\PYcY}{\pgfpointanchor{PY}{center}}
\pgfmathsetmacro{\PYx}{\PYcX/28.45274}
\pgfmathsetmacro{\PYy}{\PYcY/28.45274}
\pgfmathsetmacro{\DN}{1/(\nx/\rx+\ny/\ry)}
\pgfmathsetmacro{\SNcheck}{\YNx/\rx+\YNy/\ry}
\coordinate (PcheckY) at ($(origin)!\DN*\SNcheck!(nk)$);
\draw[->] (-0.03,0)--(1.2,0) node[right]{};
\draw[->] (0,-0.03)--(0,1.2) node[above]{};
\draw[black,thin] (origin) -- ($(origin)!1.3!(nk)$);
\draw[black,thin] (origin) -- ($(origin)!1.9!(\nx/\rx,\ny/\ry)$);
\foreach \i in {0.1,0.2,...,1.1,}{\draw[lightgray] (0,\i) -- (\i,0);}
\draw[thick,blue,->] (origin) -- (nk) node[at end,above left]{$n^N$};
\draw[thick,olive,->] (origin) -- (\nx/\rx,\ny/\ry) node[at end, below right=-2pt]{$(n^N_k/r_k)_k$};
\draw[thick,red,->] (YN) -- (PY) node[midway,above right=-5pt]{$(\Delta^N_k)_k$};
\draw[thick,red,->] (ZN) -- (ZcheckN) node[midway,above right=-5pt]{};
\draw[thin,black,densely dotted] (YN) -- (PcheckY);
\draw[thick,orange,->] (YN) -- ++(\PYx*\rx - \YNx*\rx,\PYy*\ry - \YNy*\ry) node[midway,left] {$(r_k\Delta^N_k)_k$};
\draw[thick,orange,->] (ZN) -- ++(\PYx - \YNx,\PYy - \YNy) node[at end, right] {$(\Delta^N_k)_k$};
\filldraw[black] (YN) circle(.15pt) node[above] {$Y^N(t)$};
\filldraw[black] (ZN) circle(.15pt) node[below left=-2pt] {$Z^N(t)$};
\filldraw[black] (ZcheckN) circle(.15pt) node[above left=-2pt] {$\check{Z}^N(t)$};
\filldraw[black] (PY) circle(.15pt) node[right] {$PY^N(t)$};
\filldraw[black] (PcheckY) circle(.15pt) node[right] {$\check{P}Y^N(t)$};
\end{tikzpicture}
\caption{\small {Illustration of the relationship between the empirical distribution process $Y^N(t)$, its projection $PY^N(t)$ onto the submanifold spanned by $n^N$, the adjusted projection $\check{P}Y^N(t)$ aligning with the drift, and transformed vectors $Z^N(t)$ and $\check{Z}^N(t)$. The figure demonstrates how the projection $PY^N(t)$ and drift are misaligned, making it necessary to work with the transformed processes $Z^N(t)$ and $\check{Z}^N(t)$, for which drift and projection coincide, which is necessary to prevent any `side-ways wind' close to the manifold. See splitting \eqref{triangle}, and also \eqref{proj}, \eqref{zcheck}, \eqref{del} and Remark~\ref{rem:proj}.}}
\label{fig1}
\end{figure}

\subsection{Second term}
\label{sec:L2}

\begin{proof} 
Define the maps $P$ and $\˘P$ from $Q$ to $Q$ by
\begin{equation}
\label{proj}
Py = \bbbclr{\,\sum_{k \in N_R} y_k}n^N,
\qquad
\˘Py = \bbbclr{D^N\sum_{k \in N_R} \frac{1}{r_k} y_k} n^N.
\end{equation}
It is straightforward to check that both $P$ and $\˘P$ are idempotent maps that project $Q$ onto the line spanned by the vector $n^N$. Hence, $Py = \˘PPy$, and so
\begin{equation}
\label{estterm2}
\begin{aligned}
\bnorm{S^N(t)n^N-\˘S^N(t)n^N}_2
& = \bnorm{PY^N(t)-\˘PY^N(t)}_2 = \bnorm{\˘P(PY^N(t)-Y^N(t))}_2\\
& \leq \norm{\˘P}_{\mathrm{op}}\bnorm{PY^N(t)-Y^N(t)}_2.
\end{aligned}
\end{equation}
Now, 
\begin{equation} \label{eq:npbr}
\bnorm{\˘Py}_2 = D^N\bbabs{\sum_{k \in N_R} \frac{y_k}{r_k}} \norm{n^N}_2 \leq D^N \,
\left(M_N^{(-2)}\right)^{1/2} \norm{y}_2,
\end{equation} 
where we abbreviate
\begin{equation}
M_N^{(-2)} =  \sum_{k \in N_R} \frac{1}{r_k^2}.
\end{equation} 
We know from \eqref{100} that $D_N$ is bounded $\P^{\otimes\N}$-{a.s.} Consequently, {there is a $C>0$ such that
\begin{equation*} 
\label{eq:nbrf}
\bnorm{\˘P}_{\mathrm{op}} \leq C \left(M_N^{(-2)}\right)^{1/2} \qquad \P^{\otimes\N}-a.s.
\end{equation*}  
}
Using the latter in combination with \eqref{estterm2} we have 
\begin{equation}
\label{estterm2a}
\begin{aligned}
\E_y[{\|S^N(t)n^N-\˘S^N(t)n^N\|^2_2]\,\,}
&{\, \leq\norm{\˘P}_{\mathrm{op}}^2\, \E_y[ \|PY^N(t)-Y^N(t)\|^2_2}]\\
&{\leq C M_N^{(-2)} g(t)} \\
&\leq CM_N^{(-2)} \left({\frac{\Sigma_N}{N M^-_N}} 
+ \bbbcls{g(0)-\frac{\Sigma_N}{N M^-_N}} \e^{-2N M^-_N t}\right)\\
& \leq C \left({\frac{\Sigma_NM_N^{(-2)}}{N M^-_N}} 
+ M_N^{(-2)}{g(0)}\,\e^{-2N M^-_N t}\right),
\end{aligned}
\end{equation}
{where we use  the definition of $Py$ and that $n^N_k<1$ for $k \in N_R$ in the second inequality, and \eqref{bound} in the third inequality.} By Assumption~\ref{ass}, the first term in the right-hand side of \eqref{estterm2a} converges to zero as $N\to\infty$ $\P^{\otimes\N}$-a.s. Again using \eqref{eq:nkykb} and $\e^{-x}\leq x^{-1}$, $x>0$, we see that the second term in the right-hand side of \eqref{estterm2a} converges to zero as $N\to\infty$ $\P^{\otimes\N}$-a.s.\ and uniformly in $t$ on compacts in $(0,\infty)$. 
\end{proof}

\begin{remark}
\label{rem:proj} 
{\rm Figure~\ref{fig1} illustrates why we need the various quantities in the right-hand side of \eqref{triangle}, which we discuss now.
\begin{compactenum}[label=(\alph*),wide]
\item 
While $Y^N(t)$ quickly moves close to its projection $PY^N(t) = S^N(t)n^N$ on the (blue colored) submanifold, and the distance between $Y^N(t)$ and $PY^N(t)$ can be controlled via the Lyapunov function argument, the motion of $PY^N(t)$ on the submanifold is difficult to approximate directly by a diffusion because the drift experienced by $Y^N(t)$ \emph{does not align} with the direction of the projection $P$. This means that $Y^N(t)$, and by extension also $PY(t)$, experiences a \emph{side-ways wind} across the submanifold, even when it is close to the submanifold. This wind is not seen by $PY^N(t)$ directly, which is therefore not obviously close to Markov. In contrast, for the transformed quantity $Z^N(t)$ with $Z^N_k(t) = Y^N_k(t)/r_k$, $k\in N_R$, its projection $\check{Z}^N(t)$ and drift \emph{do align}, making the motion of $\check{Z}^N(t)$ on the submanifold close to Markov, i.e., the drift term of $\check{Z}^N(t)$ vanishes (see \eqref{driftvanishes} below). 
\item 
Note that $\check{P}Y^N(t)$ is just a rescaled version of $\check{Z}^N(t)$, and the scaling is chosen so as to bring $\check{P}Y^N(t)$  back to the neighbourhood of $PY^N(t)$. Fortunately, the wind is not too strong, i.e., the direction of the main drift and the projection are close enough so long as $Y^N(t)$ is close to the submanifold. Hence, the distance between $\check{P}Y^N(t)$ and $PY^N(t)$ can be controlled by means of the distance between $Y^N(t)$ and $PY^N(t)$.
\item Figure~\ref{fig1} is drawn in correct proportions using the values $n^N = (0.37,0.63)$, $Y^N(t)=(0.17,0.61)$ and $r=(0.8,3.5)$. However, note that the actual drift experienced by the processes is much larger due to the additional factor $N$ appearing in their generators, which is not reflected in the figure.
\end{compactenum}}
\end{remark}


\subsection{Third term}
\label{sec:L3}

\begin{proof} 
Since
\begin{equation}
\bnorm{\˘S^N(t)n^N-\˘S^N(t)\P}_2 \leq \˘S^N(t) \bnorm{n^N-\P}_2,
\end{equation}
the result is a straightforward consequence of the strong law of large numbers for the multinomial distribution and the observation that $\˘S^N(t)\leq 1$. 
\end{proof}


\subsection{Fourth term}
\label{sec:L4}

The proof uses the following lemma.

\begin{lemma}[{\cite[Chapter~4, Corollary~8.4]{EK86}}] 
{Let $(K_N)_{N\in\N}$ be any increasing sequence of positive integers.} Put $E=[0,1]$ and $E_N = \{y \in E^{K_N}\colon\, y_l\leq n^N_l, \,l\in[K_N]\}$, both endowed with the respective Euclidean distances. Let $A\subset C_b(E)\times C_b(E)$ be linear and such that the closure of $A$ generates a strongly continuous contraction semigroup on the domain $\cD$ of $A$ (which is assumed to be separating). Let $X$ be a Markov process with generator $A$. For each $N \in \N$, let $Y_N$ be a progressive Markov process on $E_N$ corresponding to a measurable contraction semigroup with full generator ${\hat{A}}_N\subset C_b(E_N)\times C_b(E_N)$. Let $\eta_N\colon\,E_N\to E$ be measurable.\\[0.2cm] 
Suppose that, for each $(f,g)\in A$ and $T>0$, there exist $(f_N,g_N)\in{\hat{A}}_N$ such that
\begin{equation}
\label{101a}
\sup_{N \in \N} \sup_{0\leq t\leq T}\IE_N[\abs{f_N(Y_N(t))}] < \infty,
\qquad \sup_{N \in \N} \sup_{0\leq t\leq T}\IE_N[\abs{g_N(Y_N(t))}] < \infty,
\end{equation}
and
\begin{equation}
\label{101}
\begin{aligned}
&\lim_{N\to\infty}\IE_N\big[\babs{f_N(Y_N(t))-f(\eta_N(Y_N(t)))}\big] = 0,\\ 
&\lim_{N\to\infty}\IE_N\big[\babs{g_N(Y_N(t))-g(\eta_N(Y_N(t))}\big] = 0,
\end{aligned}
\end{equation}
where $\IE_N$ denotes expectation with respect to $Y_N(t)$. Then the finite-dimensional distributions of $\eta_N(Y_N)$ converge to those of $X$ as $N \to\infty$.
\end{lemma}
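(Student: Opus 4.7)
The plan is to lift the generator-level approximation \eqref{101a}--\eqref{101} to finite-dimensional distributional convergence of $\eta_N(Y_N)$ in three stages: derive a forward equation for the one-dimensional marginals, identify their limit via uniqueness of the martingale problem, and bootstrap to $k$-point marginals by Markov conditioning.

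\emph{Steps 1 and 2 (one-dimensional marginals via a forward equation).} Fix $(f,g)\in A$ and choose $(f_N,g_N)\in\hat A_N$ as in the hypothesis. Since $(f_N,g_N)$ belongs to the full generator of the semigroup of $Y_N$, the process
\[
M_N(t)=f_N(Y_N(t))-f_N(Y_N(0))-\int_0^t g_N(Y_N(s))\,ds
\]
is an $\IE_N$-martingale. Taking expectations, applying Fubini, and substituting \eqref{101} --- with the uniform bounds \eqref{101a} justifying the interchange of $\lim_N$ and $\int_0^t ds$ by dominated convergence --- I obtain, in the limit,
\[
\IE_N[f(\eta_N(Y_N(t)))] = \IE_N[f(\eta_N(Y_N(0)))] + \int_0^t \IE_N[g(\eta_N(Y_N(s)))]\,ds + o(1).
\]
Let $\mu_N^t$ denote the law of $\eta_N(Y_N(t))$ on $E$; the display says that every subsequential weak limit $\mu^t$ satisfies $\mu^t(f)=\mu^0(f)+\int_0^t\mu^s(g)\,ds$ for all $(f,g)\in A$. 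Because the closure of $A$ generates a strongly continuous contraction semigroup $T(t)$, the associated martingale problem is well-posed, so this forward equation admits the unique solution $\mu^t=\mu^0\circ T(t)^*$; combined with $\mathcal D$ being separating, this yields $\mu_N^t\to\mathcal L(X(t))$, i.e., convergence of one-dimensional marginals: $\IE_N[h(\eta_N(Y_N(t)))]\to\IE[h(X(t))]$ for every $h\in C_b(E)$.

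\emph{Step 3 (finite-dimensional bootstrap).} For $0\le t_1<\cdots<t_k$ and $h_1,\ldots,h_k\in C_b(E)$, I would argue by induction on $k$. Conditioning on $\mathcal F_{t_{k-1}}$ using the progressive Markov property of $Y_N$ gives
\[
\IE_N\!\left[\prod_{i=1}^k h_i(\eta_N(Y_N(t_i)))\right] = \IE_N\!\left[\left(\prod_{i=1}^{k-1}h_i(\eta_N(Y_N(t_i)))\right)S_N(t_k-t_{k-1})(h_k\circ\eta_N)(Y_N(t_{k-1}))\right],
\]
where $S_N(s)$ denotes the transition semigroup of $Y_N$. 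Applying Steps~1--2 to the process restarted at $Y_N(t_{k-1})$ yields $S_N(t_k-t_{k-1})(h_k\circ\eta_N)(Y_N(t_{k-1}))\to\widetilde h(\eta_N(Y_N(t_{k-1})))$ in the relevant sense, with $\widetilde h:=T(t_k-t_{k-1})h_k\in C_b(E)$. The induction hypothesis applied to the $k-1$ factors $h_1,\ldots,h_{k-2},h_{k-1}\widetilde h$ then identifies the limit as $\IE\!\left[\prod_{i=1}^k h_i(X(t_i))\right]$, by the analogous Markov calculation for $X$ and $T(t)$.

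The main obstacle is the interchange in Step~3: the hypothesis \eqref{101} controls convergence only under the law of $Y_N(t)$ from the given initial condition, whereas the induction requires the analogous control after conditioning on $\mathcal F_{t_{k-1}}$, i.e., for processes restarted at the random state $Y_N(t_{k-1})$. This calls for a Fubini-style argument using the uniform boundedness \eqref{101a} together with the \emph{progressive} Markov structure to decouple the inner conditional expectation from the outer integration; without progressive measurability, one could not carry this interchange through.
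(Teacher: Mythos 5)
There is a genuine gap, and it sits exactly where you flag it yourself: Step~3 is not an argument but a statement of the missing ingredient. Your Steps~1--2 only yield convergence of the \emph{unconditional} one-dimensional marginals, because the hypotheses \eqref{101a}--\eqref{101} are integrals against the law of $Y_N(t)$ started from the given initial condition. The inductive step, however, needs the semigroup-level statement that $S_N(t_k-t_{k-1})(h_k\circ\eta_N)$, evaluated at the \emph{random} state $Y_N(t_{k-1})$, is close in $L^1$ to $(T(t_k-t_{k-1})h_k)\circ\eta_N(Y_N(t_{k-1}))$. That is a conditional (i.e.\ pointwise-in-the-starting-state, or at least law-of-$Y_N(t_{k-1})$-weighted) version of the hypotheses, and it does not follow from marginal convergence nor from a ``Fubini-style'' use of \eqref{101a} and progressive measurability: distributional convergence of $\eta_N(Y_N(t))$ says nothing about the conditional kernels. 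Supplying this is precisely the content of the cited result of Ethier--Kurtz: their proof avoids restarting the process altogether by testing the approximate martingale $f_N(Y_N(t))-\int_0^t g_N(Y_N(s))\,\mathrm{d}s$ against multi-time functionals $\prod_i h_i(\eta_N(Y_N(t_i)))$ with $t_i\leq s$, so that all expectations remain under the unconditional law where \eqref{101a}--\eqref{101} apply; subsequential limits of the finite-dimensional laws (relative compactness being automatic here since $E=[0,1]$ is compact) are then identified through the martingale problem for $A$, whose well-posedness comes from the semigroup hypothesis and the separating domain. In other words, the correct route replaces your conditioning/bootstrap by a multi-time martingale-problem characterisation; without that, the induction cannot be closed.

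Two further points. First, even in Steps~1--2 the identification of the limit is under-justified: you need a common subsequence along which $\mu_N^s$ converges for (almost) all $s\leq t$, and uniqueness for the weak forward equation is not the same as ``well-posedness of the martingale problem''---it follows here from a resolvent/Laplace-transform argument using that the closure of $A$ generates a strongly continuous contraction semigroup and that $\mathcal D$ is separating; also, matching of initial laws ($\eta_N(Y_N(0))\Rightarrow X(0)$) has to enter somewhere. Second, note that the paper does not reprove this result at all: its proof consists of checking compactness of the state spaces (hence relative compactness of $\{\eta_N(Y_N(t))\}_N$ for each $t$) and then invoking \cite[Chapter~4, Corollary~8.4]{EK86} directly. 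Your proposal attempts to reprove that corollary from scratch and stalls exactly at its core difficulty.
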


\begin{proof} 
All spaces are compact, and hence separable and complete. Note that, for each $t$, $\clc{X_N(t)}_{N\in\N}$ is relatively compact, since it is a subset of a compact set. The statement now follows from \cite[Chapter~4, Corollary~8.4]{EK86} after taking $M=\cD$ therein and using Eq.~(8.12) therein.
\end{proof}
 
\noindent 
We are now ready to show that the fourth term vanishes in expectation.

\begin{proof}
Let $\˘z$ be defined by
\begin{equation}
\label{zcheck}
\˘z_k(y) = n^N_k \sum_{l \in N_R} \frac{y_l}{r_l}.
\end{equation} 
From \eqref{LNdef} (or otherwise) it is not difficult to see that
\begin{equation}
\label{driftvanishes}
\begin{aligned}
&(L^N f)(\˘z(y))\\ 
& = -N \sum_{k \in N_R} \left(y_k - n^N_k \sum_{l \in N_R} y_l\right) \sum_{m \in N_R} n^N_m f_m(\˘z)\\ 
&\quad + \frac{1}{2} \sum_{k \in N_R} \frac{1}{r_k} 
\left(y_k-2y_k \sum_{l \in N_R} y_l + n^N_k \sum_{l \in N_R} y_l\right) \sum_{m,m'\in N_R} n^N_m n^N_{m'} f_{mm'}(\˘z)\\ 
&\quad + \bigo(N^{-1}\abs{f}_3) \\
&= \frac{1}{2} \left( \sum_{k \in N_R} \˘z_k - 2 \sum_{k \in N_R} \˘z_k \sum_{l \in N_R} y_l 
+  \sum_{k \in N_R} \frac{n^N_k}{r_k} \sum_{l \in N_R} y_l\right) \sum_{m,m' \in N_R} n^N_m n^N_{m'} f_{mm'}(\˘z)\\ 
&\quad + \bigo(N^{-1}\Sigma_N\abs{f}_3),  
\end{aligned}
\end{equation}
where we use that $\sum_{l \in N_R} y_l/r_l=\sum_{k} \˘z_k$. Abbreviate
\begin{equation}
\label{del}
\Delta^N_k(y) = y_k - n^N_k \sum_{l \in N_R} y_l.
\end{equation}
Note that
\begin{equation}
\sum_{k \in N_R} \frac{\Delta^N_k(y)}{r_k} = \sum_{k \in N_R} \frac{y_k}{n_k} - \sum_{k \in N_R} \frac{n^N_k}{r_k} \sum_{l \in N_R} y_l,
\end{equation}
and, hence,
\begin{equation}
\sum_{l \in N_R} y_l = D^N \sum_{k \in N_R} \˘z_k - D^N \sum_{k \in N_R} \frac{\Delta^N_k(y)}{r_k}.
\end{equation}
We can therefore write 
\begin{equation}
\begin{aligned}
(L^N f)(\˘z(y)) 
&= \frac{1}{2} \Bigg(\sum_{k \in N_R} \˘z_k - 2 \sum_{k \in N_R} \˘z_k 
\left(D^N \sum_{k \in N_R} \˘z_k - D^N \sum_{k \in N_R} \frac{\Delta^N_k(y)}{r_k}\right)\\
&\qquad\qquad +  (D^N)^{-1}\left( \sum_{k \in N_R} \˘z_k - D^N \sum_{k \in N_R} \frac{\Delta_k(y)}{r_k}\right)\Bigg)\\ 
&\qquad \times \sum_{k,l \in N_R} n^N_k n^N_l f_{kl}(\˘z) + \bigo(N^{-1}\Sigma_N\abs{f}_3)\\
&= \left(\sum_{k \in N_R} \˘z_k \left(1- D^N \sum_{k \in N_R} \˘z_k\right)
- \frac{1}{2} \left(1- 2D^N\sum_{k \in N_R} \˘z_k\right) \sum_{k \in N_R} \frac{\Delta^N_k(y)}{r_k}\right)\\
&\qquad \times \sum_{m,m' \in N_R} n^N_m n^N_{m'} f_{mm'}(\˘z) + \bigo(N^{-1}\Sigma_N\abs{f}_3).
\end{aligned}
\end{equation}
Hence, for $\˘s^N(y) = D^N \sum_{k \in N_R} \˘z_k$, we obtain
\begin{equation}
\begin{aligned}  
&(L^N f)(\˘s^N(y))\\ 
&= (D^N)^2 \left(\sum_{k \in N_R} \˘z_k \left(1- D^N \sum_{k \in N_R} \˘z_k\right)
- \frac{1}{2} \left(1- 2D^N \sum_{k \in N_R} \˘z_k\right) \sum_{k \in N_R} \frac{\Delta^N_k(y)}{r_k}\right) f''(\˘s^N)\\
&\quad + \bigo(N^{-1}\abs{f}_3)\\
&= \left(D^N\˘s^N(1-\˘s^N) - \frac12D^N(1-2\˘s^N) \sum_{k \in N_R} \frac{\Delta^N_k(y)}{r_k}\right) f''(\˘s^N) 
+ \bigo\bclr{N^{-1}\Sigma_N\abs{f}_3}.
\end{aligned}
\end{equation}
{Recall $L$ from \eqref{eq:gen}.} Now, let
\begin{equation}
 A = \bclc{(f,Lf)\colon\,f\in C^3([0,1])}\subset C_b([0,1])\times C_b([0,1]).
\end{equation}
The fact that the closure of $A$ generates a strongly continuous contraction semigroup (i.e., Feller semigroup) corresponding to the Fisher-Wright diffusion on $[0,1]$ is standard. Moreover, it is clear that {$C^3([0,1])$} is an algebra and separates points. The processes $Y_N$ are right-continuous and measurable with respect to their own filtration, hence progressive. Define the linear operator
\begin{equation}
A_N = \bclc{(f,L^Nf)\colon\,f\in C^3(E_N)}\subset C_b(E_N)\times C_b(E_N),
\end{equation}
the closure of which generates the process $Y_N$ (we will not need the flexibility of the full generator). 

Write $\eta_N(y) = \˘s(y) = D^N\sum y_l/r_l$. Let $f\in C^3([0,1])$ be arbitrary, and let $g = Lf$. Set $f_N(y) = f(\eta_N(y))$, which clearly is an element of $C^3(E_N)$, and hence an element of the domain of $A_N$. The first conditions of \eqref{101a} and \eqref{101} are therefore trivially satisfied. With $g_N = L^N f_N$,
\begin{equation}
\babs{g(\eta_N(y))-g_N(y)}\\
\leq \abs{D^N-D}\abs{f}_2 + \frac12 \abs{f}_2 D^N \sum_{k \in N_R} \frac{1}{r_k}
\abs{\Delta^N_k(y)} + \bigo\bclr{N^{-1}\Sigma_N\abs{f}_3}.  
\end{equation}
Consequently, from the above, taking expectations and using \eqref{bound}, we get
\begin{equation}
\label{ekin}
\begin{aligned}
\mel\IE_y\big[\babs{g(\eta_N(Y_N(t)))-g_N(Y_N(t))}\big]\\
&\leq \abs{D^N-D}\abs{f}_2 +  D^N \abs{f}_2
\bbbclr{M_N^{(-2)} \IE_y\left[\sum_{k \in N_R} \Delta^N_k(Y_N(t))^2\right]}^{1/2} 
+ \bigo\bclr{N^{-1}\Sigma_N\abs{f}_3 }\\ 
&\leq \abs{D^N-D}\abs{f}_2 \\
&\quad + D^N \abs{f}_2
\bbbclr{M_N^{(-2)} \bigg(\frac{\Sigma_N}{N M^-_N}
+ \bbbcls{g(0)-\frac{\Sigma_N}{N M^-_N}}\e^{-2N M^-_N t}\bigg)}^{1/2}
+ \bigo\bclr{N^{-1}\Sigma_N\abs{f}_3}.
\end{aligned}
\end{equation}
Now, the first term in the right-hand side of \eqref{ekin} converges to zero as $N\to\infty$ $\P^{\otimes\N}$-a.s.\ because of \eqref{100}. The second term can be treated in the same way as \eqref{estterm2a}. Finally, note that there exists a $C$ such that, for $N$ large enough, $M^-_N\leq C M_N^{(-2)}$, and hence
\begin{equation}
\frac{\Sigma_N}{N } = \frac{\Sigma_N M^-_N}{N M^-_N} \leq \frac{C\Sigma_NM_N^{(-2)}}{N M^-_N}.
\end{equation}
Therefore, the second part of \eqref{101} holds $\P^{\otimes\N}$-a.s. Finally, 
\begin{equation}
\abs{g_N(y)}\leq \abs{Lf(\eta_N(y))} + \abs{L^N (f\circ\eta_N)(y)-Lf(\eta_N(y))}.
\end{equation}
The first term is bounded since $f\in C^3([0,1])$ and the second term is bounded in expectation due to \eqref{ekin}, which implies the second part of \eqref{101a}. {This implies convergence of the finite-dimensional distributions on the real line.}

{From \cite[Theorem~2.1]{BM16} we have that, for real-valued stochastic processes, convergence of the finite-dimensional distributions on the real line implies weak convergence with respect to the topology induced by convergence of measure, which is equivalent to the Meyer-Zheng topology, hence no tightness is needed. We conclude that $\˘S^N$ converges to $S$ weakly with respect to the Meyer-Zheng topology. Since~$\Xi$ is a Polish space, we can use Skorokhod's representation theorem to couple $\˘S^N$ and $S$ such that $\˘S^N$ converges to $S$ almost surely. As noted after Corollary 2.2 of \cite{BM16}, convergence of uniformly bounded processes with respect to the convergence in measure topology, implies $L_2$ convergence, which concludes the proof.}

\end{proof}


\Addresses

\end{document}